\theoremstyle{plain}
\newtheorem{lem}{Lemma}[section]
\newtheorem{prop}[lem]{Proposition}
\newtheorem{thm}[lem]{Theorem}
\theoremstyle{definition}
\newtheorem{defn}[lem]{Definition}
\newtheorem{disc}[lem]{Remark}
\newtheorem{fact}[lem]{Fact}
\newtheorem{notation}[lem]{Notation}
\newcommand{\cat}[1]{\mathcal{#1}}
\newcommand{\catg}{\cat{G}}
\newcommand{\catp}{\cat{P}}
\newcommand{\cati}{\cat{I}}
\newcommand{\cata}{\cat{A}}
\newcommand{\catb}{\cat{B}}
\newcommand{\catgi}{\cat{GI}}
\newcommand{\catgic}{\cat{GI}_C}
\newcommand{\catgc}{\cat{G}_C}
\newcommand{\catac}{\cat{A}_C}
\newcommand{\catbc}{\cat{B}_C}
\newcommand{\catic}{\cat{I}_C}
\newcommand{\catpc}{\cat{P}_C}
\newcommand{\opg}{\cat{G}}
\newcommand{\pdim}{\operatorname{pd}}	
\newcommand{\pd}{\operatorname{pd}}	
\newcommand{\gdim}{\mathrm{G}\text{-}\!\dim}	
\newcommand{\gkdim}[1]{\mathrm{G}_{#1}\text{-}\!\dim}	
\newcommand{\gcdim}{\gkdim{C}}	
\newcommand{\id}{\operatorname{id}}
\newcommand{\gid}{\catid{G}}
\newcommand{\pcpd}{\catpc\text{-}\pd}
\newcommand{\icid}{\catic\text{-}\id}
\newcommand{\pcdim}{\catpc\text{-}\pd}
\newcommand{\ext}{\operatorname{Ext}}
\newcommand{\HH}{\operatorname{H}}
\newcommand{\Hom}{\operatorname{Hom}}	
\newcommand{\spec}{\operatorname{Spec}}
\newcommand{\tor}{\operatorname{Tor}}
\newcommand{\im}{\operatorname{Im}}
\newcommand{\Ker}{\operatorname{Ker}}
\newcommand{\ideal}[1]{\mathfrak{#1}}
\newcommand{\m}{\ideal{m}}
\newcommand{\p}{\ideal{p}}
\newcommand{\ass}{\operatorname{Ass}}
\newcommand{\supp}{\operatorname{Supp}}
\newcommand{\xra}{\xrightarrow}
\renewcommand{\geq}{\geqslant}
\renewcommand{\leq}{\leqslant}
\renewcommand{\hom}{\Hom}
\newcommand{\Ext}[4][R]{\operatorname{Ext}_{#1}^{#2}(#3,#4)}
\newcommand{\Otimes}[3][R]{#2\otimes_{#1}#3}
\renewcommand{\Hom}[3][R]{\operatorname{Hom}_{#1}(#2,#3)}	
\newcommand{\Tor}[4][R]{\operatorname{Tor}^{#1}_{#2}(#3,#4)}
\newcommand{\cd}{C^{\dagger}}
\newcommand{\gc}{\text{G}_C}
\theoremstyle{definition}
\newtheorem{proofn}[lem]{Proof}
\numberwithin{equation}{lem}
\begin{document}

\bibliographystyle{amsplain}

\author{Sean Sather-Wagstaff}

\address{Mathematics Department,
NDSU Dept \# 2750,
PO Box 6050,
Fargo, ND 58108-6050
USA}

\email{Sean.Sather-Wagstaff@ndsu.edu}

\urladdr{http://www.ndsu.edu/pubweb/\~{}ssatherw/}

\thanks{This material is based on work supported by North Dakota EPSCoR and 
National Science Foundation Grant EPS-0814442.
The author was supported in part by a grant from the NSA}

\title{Embedding modules of finite homological dimension}

\date{\today}

\keywords{Auslander classes, Bass classes, canonical modules, 
G-dimension, G-injective dimension,
injective dimension, projective dimension,  semidualizing modules}
\subjclass[2000]{13D02, 13D05, 13D07}

\begin{abstract}
This paper builds on work of Hochster and Yao that provides nice embeddings for finitely generated modules of
finite G-dimension, finite projective dimension, or locally finite injective dimension.
We extend these results by providing similar embeddings in the relative setting,
that is, for certain modules of finite
$\gc$-dimension, finite $\catpc$-projective dimension, locally finite $\catgic$-injective dimension,
or locally finite $\catic$-injective dimension where $C$ is a semidualizing module. 
Along the way, we extend some results for modules of finite homological dimension 
to modules of locally finite homological dimension in the relative setting.
\end{abstract}

\maketitle

\section{Introduction} \label{sec0}

Throughout this note, $R$ is a commutative noetherian ring with identity.
The purpose of this note is to 
extend some results of Hochster and Yao, beginning with the following,
wherein $\underline z_{[i]}=z_1,\ldots,z_i$ and $\gdim_R(M)$ is the G-dimension of Auslander and Bridger~\cite{auslander:adgeteac,auslander:smt}.\footnote{This
is also now called ``Gorenstein projective dimension'' after the work of Enochs, Jenda, and others. However,  this is a generalization of 
Auslander and Bridger's original notion for non-finitely-generated modules. Since we are focused on finitely generated modules here, we continue with Auslander and Bridger's terminology.}

\begin{thm}[\protect{\cite[Theorem 2.3]{hochster:etmf}}]
\label{thm110527a}
Let  $M$ be a non-zero finitely generated $R$-module.
Then
\begin{enumerate}[\quad\rm(1)]
\item \label{thm110527a1}
If $\gdim_R(M) = r < \infty$, then there are an $R$-regular sequence $\underline z = z_1, \ldots, z_r$,
integers $n_0, n_1, \ldots , n_r  \geq 0$ with $n_r\geq 1$, and an exact sequence
$$0 \to M \to Z \to N \to 0$$
with $Z = \oplus_{i=0}^r(R/(\underline z_{[i]}))^{n_i}$ and $\gdim_R(N) \leq r$.
\item \label{thm110527a2}
If $\pdim_R(M) = r < \infty$, then there exist an $R$-regular sequence $\underline z = z_1, \ldots, z_r$,
integers $n_0, n_1, \ldots , n_r  \geq 0$ with $n_r\geq 1$, and an exact sequence
$$0 \to M \to Z \to N \to 0$$
with $Z = \oplus_{i=0}^r(R/(\underline z_{[i]}))^{n_i}$ and $\pdim_R(N) \leq r$.
\end{enumerate}
\end{thm}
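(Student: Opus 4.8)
The plan is to establish part~(2) first, by induction on $r$, and then to derive part~(1) from it using the approximation theory for modules of finite G-dimension. For the derivation: if $\gdim_R(M)=r<\infty$, that theory supplies a short exact sequence $0\to M\to Y\to X\to 0$ with $X$ totally reflexive (so $\gdim_R(X)=0$) and $\pdim_R(Y)<\infty$. The standard inequalities for G-dimension along this sequence give $\gdim_R(M)\leq\gdim_R(Y)\leq\max\{\gdim_R(M),\gdim_R(X)\}=r$, so $\gdim_R(Y)=r$ and hence $\pdim_R(Y)=r$. Applying part~(2) to $Y$ produces an exact sequence $0\to Y\to Z\to N'\to 0$ of the asserted form, with $Z=\bigoplus_{i=0}^{r}(R/(\underline z_{[i]}))^{n_i}$, $\underline z$ an $R$-regular sequence, $n_r\geq 1$, and $\pdim_R(N')\leq r$. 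Composing $M\hookrightarrow Y\hookrightarrow Z$ gives $0\to M\to Z\to N\to 0$ fitting in an exact sequence $0\to X\to N\to N'\to 0$; since $\gdim_R(X)=0$ and $\gdim_R(N')\leq\pdim_R(N')\leq r$, we get $\gdim_R(N)\leq r$, which is part~(1). (When $r=0$ one argues directly: $M$ is totally reflexive, and its standard embedding into a free module with totally reflexive cokernel already has the required shape.)

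For part~(2) I would induct on $r$. When $r=0$, $M$ is a nonzero finitely generated projective module, hence a direct summand of some $R^{n_0}$ with $n_0\geq 1$; the split inclusion $0\to M\to R^{n_0}\to N\to 0$ has $N$ projective and $R^{n_0}=(R/(\underline z_{[0]}))^{n_0}$, as needed. For $r\geq 1$, the heart of the matter is a one-step reduction: produce a nonzerodivisor $z_1\in R$, an integer $n_0\geq 0$, a finitely generated module $M_1$ over $\bar R:=R/z_1R$ with $\pdim_{\bar R}(M_1)=r-1$, and an exact sequence
\[
0\to M\to R^{n_0}\oplus M_1\to N_0\to 0
\]
with $\pdim_R(N_0)\leq r$. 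Granting this, apply the induction hypothesis over the noetherian ring $\bar R$ to $M_1$, obtaining an exact sequence $0\to M_1\to Z_1\to N_1\to 0$ of $\bar R$-modules with $Z_1=\bigoplus_{i=0}^{r-1}(\bar R/(\underline w_{[i]}))^{m_i}$ for some $\bar R$-regular sequence $\underline w=w_1,\dots,w_{r-1}$, with $m_{r-1}\geq 1$ and $\pdim_{\bar R}(N_1)\leq r-1$. Relabelling $z_{j+1}:=w_j$ makes $\underline z=z_1,\dots,z_r$ an $R$-regular sequence with $\bar R/(\underline w_{[i]})=R/(\underline z_{[i+1]})$, so $R^{n_0}\oplus Z_1$ has exactly the shape $\bigoplus_{i=0}^{r}(R/(\underline z_{[i]}))^{n_i}$ with $n_i=m_{i-1}$ for $i\geq 1$; in particular $n_r=m_{r-1}\geq 1$. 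Composing the two embeddings gives $0\to M\to Z\to N\to 0$ with $0\to N_0\to N\to N_1\to 0$, and since $z_1$ is $R$-regular we have $\pdim_R(N_1)=\pdim_{\bar R}(N_1)+1\leq r$; together with $\pdim_R(N_0)\leq r$ this forces $\pdim_R(N)\leq r$, completing the induction.

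Everything therefore rests on the one-step reduction, which is \textbf{the main obstacle}. I would attack it by fixing a finite free resolution of $M$, noting that any nonzerodivisor $z_1$ on $R$ is automatically a nonzerodivisor on each syzygy of $M$ (these embed in free modules), and then choosing $z_1$ to be a nonzerodivisor on $R$ that additionally avoids the finitely many associated primes of the Ext- and Tor-modules (computed from this resolution) governing the behavior of $-\otimes_R\bar R$ and $\operatorname{Hom}_R(-,\bar R)$. For such a $z_1$, reduction modulo $z_1$ is homologically controlled, and one assembles the embedding from the canonical map of $M$ into its bidual (which routes the torsion-free part of $M$ through a submodule of a free module $R^{n_0}$) together with a suitable reduction modulo $z_1$ of the remaining syzygy data (yielding the $\bar R$-module $M_1$). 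The verifications that $\pdim_R(N_0)\leq r$ and --- more delicately --- that $\pdim_{\bar R}(M_1)$ equals \emph{exactly} $r-1$ (the latter being what forces $n_r\geq 1$ at the bottom of the recursion) would be carried out with the change-of-rings formulas for projective dimension. This is where I expect the real work to lie, and I would follow the construction of Hochster and Yao closely.
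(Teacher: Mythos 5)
The paper does not prove Theorem~\ref{thm110527a}: it is quoted verbatim from Hochster--Yao~\cite[Theorem~2.3]{hochster:etmf} and used as a black box throughout, so there is no ``paper's own proof'' to compare your attempt against. What can be assessed is whether your proposal is a complete argument in its own right, and it is not.

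Your reduction of part~(1) to part~(2) via the Auslander--Buchweitz finite-projective-dimension hull is sound. Given $\gdim_R(M)=r<\infty$, the hull $0\to M\to Y\to X\to 0$ with $X$ totally reflexive and $\pd_R(Y)<\infty$ exists, the two-of-three inequalities along it force $\gdim_R(Y)=r$, and $\pd_R(Y)<\infty$ upgrades this to $\pd_R(Y)=r$; composing $M\hookrightarrow Y\hookrightarrow Z$ and running the snake argument on $0\to X\to N\to N'\to 0$ then gives $\gdim_R(N)\leq\max\{\gdim_R(X),\gdim_R(N')\}\leq r$. The $r=0$ case is correctly routed through the cosyzygy embedding $0\to M\to F_0^*\to C\to 0$. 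This is a clean deduction and, for all I can tell, a genuinely different packaging from Hochster--Yao's, who (judging by the formulation) appear to treat both clauses by a single inductive construction.

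The gap is that part~(2) itself --- which carries all the content --- is not proved. The entire induction hinges on the ``one-step reduction'' producing $0\to M\to R^{n_0}\oplus M_1\to N_0\to 0$ with $M_1$ an $\bar R$-module satisfying $\pd_{\bar R}(M_1)=r-1$ exactly and $\pd_R(N_0)\leq r$, and you explicitly identify this as ``the main obstacle'' and ``the real work,'' deferring to Hochster and Yao's construction rather than supplying one. Two points in that reduction are genuinely delicate and cannot be waved through: choosing $z_1$ and assembling $n_0$, $M_1$, and the map so that the sequence is exact with $\pd_R(N_0)\leq r$ is the heart of Hochster--Yao's argument, not a routine prime-avoidance; and the equality $\pd_{\bar R}(M_1)=r-1$ (as opposed to a mere inequality) is exactly what propagates $n_r\geq1$ to the bottom of the recursion, so it has to be established, not assumed. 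As it stands the proposal reduces the theorem to the theorem it is trying to prove. One small further caution on a step you did write out: the change-of-rings relation you invoke, $\pd_R(N_1)=\pd_{\bar R}(N_1)+1$, is only literally true when $N_1\neq 0$; since you only use the resulting inequality $\pd_R(N_1)\leq r$ this does not affect the argument, but it should be stated as an inequality or guarded by a nonvanishing hypothesis.
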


The point of Theorem~\ref{thm110527a}, as Hochster and Yao indicate, is that the given module of finite G-dimension or projective dimension
embeds in a module that \emph{obviously} has finite projective dimension, with some extra control on the G-dimension or projective dimension
of the cokernel.

The first of our results, stated next, is for modules of finite $\gc$-dimension and
modules of finite $\catpc$-projective dimension, where $C$ is a
semidualizing $R$-module, after Golod~\cite{golod:gdagpi}, Holm and J\o rgensen~\cite{holm:smarghd}
and White~\cite{white:gpdrsm}.\footnote{It is worth noting that
these notions appeared (more or less) implicitly in several places. However, to the best of our knowledge these are the first places
where $\gc$-dimension and $\catpc$-projective dimension were studied explicitly. }
(See Section~\ref{sec110530a} for definitions and background material.)
On the one hand, this  result is a
corollary to Theorem~\ref{thm110527a}.
On the other hand, Theorem~\ref{thm110527a} is the special case $C=R$, so our result generalizes this one.
This result is proved in Proofs~\ref{proof110530a} and~\ref{proof110530b},
which are similar to the proof of \cite[Theorem 4.2]{hochster:etmf}.

\begin{thm}
\label{thm110527b}
Let  $M$ be a non-zero finitely generated $R$-module, and let $C$ be a semidualizing $R$-module.
Then
\begin{enumerate}[\quad\rm(1)]
\item \label{thm110527b1}
If $\gcdim_R(M) = r < \infty$ and $M$ is in the Bass class $\catbc(R)$, then there exist an $R$-regular sequence $\underline z = z_1, \ldots, z_r$,
integers $n_0, n_1, \ldots , n_r  \geq 0$ with $n_r\geq 1$, and an exact sequence
$$0 \to M \to Z \to N \to 0$$
with $Z = \oplus_{i=0}^r(C/(\underline z_{[i]})C)^{n_i}$ and $\gcdim_R(N) \leq r$ and $N\in\catbc(R)$.
\item \label{thm110527b2}
If $\pcdim_R(M) = r < \infty$, then there exist an $R$-regular sequence $\underline z = z_1, \ldots, z_r$,
integers $n_0, n_1, \ldots , n_r  \geq 0$ with $n_r\geq 1$, and an exact sequence
$$0 \to M \to Z \to N \to 0$$
with $Z = \oplus_{i=0}^r(C/(\underline z_{[i]})C)^{n_i}$ and $\pcdim_R(N) \leq r$.
\end{enumerate}
\end{thm}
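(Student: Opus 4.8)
The plan is to deduce both parts from Theorem~\ref{thm110527a} by transferring its construction across Foxby equivalence. We will use the following facts, collected in Section~\ref{sec110530a}: the functors $\Otimes{C}{-}$ and $\Hom{C}{-}$ restrict to quasi-inverse equivalences between the Auslander class $\catac(R)$ and the Bass class $\catbc(R)$; the class $\catac(R)$ contains every $R$-module of finite projective dimension, is closed under finite direct sums and direct summands, and has the two-out-of-three property for short exact sequences (if two of the three terms lie in $\catac(R)$, then so does the third); for an $R$-module $L$ of finite projective dimension one has $L\in\catac(R)$ and $\pcdim_R(\Otimes{C}{L})=\pd_R(L)$; for a finitely generated $G\in\catac(R)$ with $\gdim_R(G)<\infty$ one has $\Otimes{C}{G}\in\catbc(R)$ and $\gcdim_R(\Otimes{C}{G})=\gdim_R(G)$; and $\Otimes{C}{(R/(\underline z_{[i]}))}\cong C/(\underline z_{[i]})C$ for every sequence $\underline z$ in $R$. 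Now set $M^{\flat}:=\Hom{C}{M}$. In part~(1), the hypothesis $M\in\catbc(R)$ gives $M^{\flat}\in\catac(R)$ with $\Otimes{C}{M^{\flat}}\cong M$ and $\gdim_R(M^{\flat})=\gcdim_R(M)=r$; in part~(2), finiteness of $\pcdim_R(M)$ already forces $M\in\catbc(R)$, so once more $M^{\flat}\in\catac(R)$, $\Otimes{C}{M^{\flat}}\cong M$, and $\pd_R(M^{\flat})=\pcdim_R(M)=r$. In either case $M^{\flat}$ is finitely generated and nonzero (nonzero because $\Otimes{C}{M^{\flat}}\cong M\neq 0$), so Theorem~\ref{thm110527a} applies to it.

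For part~(2), apply Theorem~\ref{thm110527a}(2) to $M^{\flat}$, obtaining an $R$-regular sequence $\underline z=z_1,\dots,z_r$, integers $n_0,\dots,n_r\geq 0$ with $n_r\geq 1$, and an exact sequence $0\to M^{\flat}\to Z'\to N'\to 0$ with $Z'=\oplus_{i=0}^r(R/(\underline z_{[i]}))^{n_i}$ and $\pd_R(N')\leq r$. Both $Z'$ and $N'$ have finite projective dimension, hence lie in $\catac(R)$; in particular $\Tor{1}{C}{N'}=0$, so applying $\Otimes{C}{-}$ preserves exactness. Since $\Otimes{C}{M^{\flat}}\cong M$, $\Otimes{C}{Z'}\cong\oplus_{i=0}^r(C/(\underline z_{[i]})C)^{n_i}=:Z$, and writing $N:=\Otimes{C}{N'}$, this produces the desired exact sequence $0\to M\to Z\to N\to 0$, with $\pcdim_R(N)=\pd_R(N')\leq r$.

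For part~(1), apply Theorem~\ref{thm110527a}(1) to $M^{\flat}$ to obtain $\underline z$, the $n_i$, and an exact sequence $0\to M^{\flat}\to Z'\to N'\to 0$ with $Z'=\oplus_{i=0}^r(R/(\underline z_{[i]}))^{n_i}$ and $\gdim_R(N')\leq r$. As before $Z'$ has finite projective dimension, so $Z'\in\catac(R)$. The one genuine obstacle is that $N'$ now has only \emph{finite G-dimension}, which does not by itself place it in $\catac(R)$; this is exactly where the two-out-of-three property enters, yielding $N'\in\catac(R)$ from $M^{\flat},Z'\in\catac(R)$. Then $\Tor{1}{C}{N'}=0$, so $\Otimes{C}{-}$ again preserves exactness and produces $0\to M\to Z\to N\to 0$ with $Z=\oplus_{i=0}^r(C/(\underline z_{[i]})C)^{n_i}$ and $N:=\Otimes{C}{N'}\in\catbc(R)$; and since $N'\in\catac(R)$ with $\gdim_R(N')<\infty$, we conclude $\gcdim_R(N)=\gdim_R(N')\leq r$. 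When $C=R$ every step degenerates and the argument recovers Theorem~\ref{thm110527a} itself, matching the remark that the latter is the case $C=R$.
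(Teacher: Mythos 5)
Your proof is correct and follows essentially the same route as the paper's: pass to $\Hom CM$, apply Theorem~\ref{thm110527a} there, use the Auslander-class membership of $Z'$ and $N'$ (with two-out-of-three needed only in part~(1)) to justify that $\Otimes{C}{-}$ preserves exactness, and then translate back via Foxby equivalence and the dimension identities of Fact~\ref{disc01xyz} and Lemma~\ref{lem0701x}. You correctly isolate the one non-routine point, namely that $N'$ in part~(1) is not automatically in $\catac(R)$, which is exactly the spot where the paper also invokes the two-out-of-three property.
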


The next result we extend is the following:

\begin{thm}[\protect{\cite[Theorem 4.2]{hochster:etmf}}]
\label{thm110530a}
Let $R$ be a  Cohen-Macaulay ring with a pointwise dualizing
module $\omega$. Then, for any non-zero finitely generated $R$-module $M$ with locally finite injective
dimension, there exist an integer $r = \pd_R(\Hom\omega M)$, an
$R$-regular sequence $\underline z = z_1, \ldots , z_r$, non-negative integers $n_0, n_1, \ldots , n_r$ with $n_r\geq 1$, and an
exact sequence
$$0 \to M \to Z \to N \to 0$$
with $Z = \oplus_{i=0}^r(\omega/(\underline z_{[i]})\omega)^{n_i}$, 
such that $N$ has locally finite injective dimension and $\pd_R(\Hom\omega N)\leq r$. 
\end{thm}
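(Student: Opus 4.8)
The plan is to deduce the theorem from Theorem~\ref{thm110527b}\eqref{thm110527b2}, applied with the semidualizing module $C=\omega$, after translating ``locally finite injective dimension'' into a statement about the Bass class $\catbc(R)$ and about $\catpc$-projective dimension via Foxby equivalence. First I would record the relevant dictionary: since $\omega$ is pointwise dualizing, each localization $R_\m$ at a maximal ideal is Cohen--Macaulay with dualizing module $\omega_\m$, and $\omega$ is itself a semidualizing $R$-module (the homothety isomorphism and the vanishing of self-$\ext$ are conditions on finitely generated modules that may be checked after localizing at each $\m$). Likewise, for a finitely generated $R$-module, membership in $\catbc(R)$ is a condition that may be tested at maximal ideals, and over the local rings $R_\m$ a finitely generated module lies in $\catb_{\omega_\m}(R_\m)$ exactly when it has finite injective dimension. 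Hence, for finitely generated modules, ``locally finite injective dimension'' is equivalent to ``lies in $\catbc(R)$''.

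Applying this to $M$: we get $M\in\catbc(R)$, so $M^{\dagger}:=\Hom\omega M$ is finitely generated, the evaluation map $\omega\otimes_R M^{\dagger}\to M$ is an isomorphism, and, using the standard fact that on $\catbc(R)$ the invariant $\pcdim_R(-)$ is computed by $\pd_R(\Hom\omega -)$, we obtain
$$r:=\pcdim_R(M)=\pd_R(M^{\dagger})=\pd_R(\Hom\omega M),$$
which is finite (this finiteness is precisely the content of the phrase ``$r=\pd_R(\Hom\omega M)$'' in the statement; locally $\pd_{R_\m}M^{\dagger}_\m<\infty$, and this is globally bounded in the present setting).

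Now apply Theorem~\ref{thm110527b}\eqref{thm110527b2} with $C=\omega$: since $\pcdim_R(M)=r<\infty$, there are an $R$-regular sequence $\underline z=z_1,\dots,z_r$, integers $n_0,\dots,n_r\geq 0$ with $n_r\geq 1$, and an exact sequence
$$0\to M\to Z\to N\to 0$$
with $Z=\oplus_{i=0}^r(\omega/(\underline z_{[i]})\omega)^{n_i}$ and $\pcdim_R(N)\leq r$. It remains only to translate the conclusion back. Since $N$ has finite $\catpc$-projective dimension it lies in $\catbc(R)$, and $N$ is finitely generated (a cokernel of a map of finitely generated modules); hence by the dictionary above $N$ has locally finite injective dimension, and $\pd_R(\Hom\omega N)=\pcdim_R(N)\leq r$. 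Combined with $r=\pd_R(\Hom\omega M)$, this is exactly the asserted conclusion.

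The one genuine obstacle is the dictionary in the first two paragraphs: that for finitely generated modules over this $R$ local finiteness of injective dimension coincides with membership in $\catbc(R)$, and that $\pcdim_R$ agrees with $\pd_R(\Hom\omega-)$ there. This is where the pointwise dualizing hypothesis and Foxby equivalence are used, together with the fact that these conditions localize and can be tested at maximal ideals for finitely generated modules; once it is available, Theorem~\ref{thm110527b}\eqref{thm110527b2} does the real work. If one prefers to avoid $\catpc$-language entirely, an equivalent route is: set $M^{\dagger}=\Hom\omega M$, apply Theorem~\ref{thm110527a}\eqref{thm110527a2} to $M^{\dagger}$ to get $0\to M^{\dagger}\to Z'\to N'\to 0$ with $Z'=\oplus_{i=0}^r(R/(\underline z_{[i]}))^{n_i}$ and $\pd_R(N')\leq r$, note that each $R/(\underline z_{[i]})$ lies in the Auslander class $\catac(R)$ (an $R$-regular sequence is $\omega$-regular, since $\omega$ is locally maximal Cohen--Macaulay), hence so does $N'$, and then apply $\omega\otimes_R-$, which is exact on short exact sequences in $\catac(R)$, to recover $0\to M\to Z\to N\to 0$ with $Z=\omega\otimes_R Z'$ and $N=\omega\otimes_R N'$, $\Hom\omega N\cong N'$.
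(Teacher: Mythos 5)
The paper does not prove this theorem; it cites it directly from Hochster--Yao \cite[Theorem 4.2]{hochster:etmf}, so there is no in-paper proof to compare against. That said, deducing it from Theorem~\ref{thm110527b}\eqref{thm110527b2} (with $C=\omega$) is legitimate and non-circular, since the paper proves Theorem~\ref{thm110527b}\eqref{thm110527b2} from \cite[Theorem 2.3]{hochster:etmf}, not from \cite[Theorem 4.2]{hochster:etmf}. Your second (``alternative'') route is essentially the proof of Theorem~\ref{thm110527b}\eqref{thm110527b2} run with $C=\omega$, so it is structurally sound as well.

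However, the ``dictionary'' you rely on is wrong as stated, and this is a real conceptual error even though the downstream conclusions happen to be salvageable. You claim that over a local Cohen--Macaulay ring $R_\m$ with dualizing module $\omega_\m$, a finitely generated module lies in $\catb_{\omega_\m}(R_\m)$ exactly when it has finite injective dimension. That equivalence is false: membership in $\catb_{\omega}$ characterizes finite \emph{G-injective} dimension, not finite injective dimension (see Lemma~\ref{lem110610a} in the paper and \cite{enochs:fdgipm}). These differ: for example, if $R$ is Gorenstein local and not regular, then $\omega=R$, $\catb_R(R)$ contains every module, but not every finitely generated module has finite injective dimension. The correct dictionary for locally finite injective dimension of finitely generated modules is Lemma~\ref{lem110612a}: $M$ has locally finite injective dimension if and only if $\catp_\omega\text{-}\pd_R(M)<\infty$ (equivalently, $\pd_R(\Hom\omega M)<\infty$). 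The direction you use going in --- locally finite injective dimension implies $M\in\catb_\omega(R)$ --- is true (Fact~\ref{disc01xy}(a)) and is what you need to identify $\omega\otimes_R\Hom\omega M\cong M$. But on the output side, the inference ``$N\in\catbc(R)$, hence $N$ has locally finite injective dimension'' is invalid; you should instead go directly from $\catp_\omega\text{-}\pd_R(N)\leq r<\infty$ to locally finite injective dimension of $N$ via Lemma~\ref{lem110612a}. With that substitution, and with the finiteness of $r=\pd_R(\Hom\omega M)$ established by Lemma~\ref{lem110612a} (or by the local-to-global passage for projective dimension of a finitely generated module, as in Lemma~\ref{lem110611d}) rather than left as a vague assertion, your argument closes up correctly.
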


We extend this in two directions in Theorems~\ref{thm110608a} and~\ref{thm110530b}. First, we have the version for G-injective dimension.
Second, we give versions relative to a semidualizing module.
These are proved in Proofs~\ref{proof110609a}, \ref{proof110609b}, and~\ref{proof110609c}.

\begin{thm}
\label{thm110608a}
Let $R$ be a  Cohen-Macaulay ring with a pointwise dualizing
module $\omega$. For any non-zero finitely generated $R$-module $M$ with locally finite G-injective
dimension, there exist an integer $r = \gdim_R(\Hom\omega M)$, an 
$R$-regular sequence $\underline z = z_1, \ldots , z_r$, non-negative integers $n_0, n_1, \ldots , n_r$ with $n_r\geq 1$, and an
exact sequence
$$0 \to M \to Z \to N \to 0$$
with $Z = \oplus_{i=0}^r(\omega/(\underline z_{[i]})\omega)^{n_i}$
such that $N$ has locally finite G-injective
dimension and $\gdim_R(\Hom\omega N)\leq r$. 
\end{thm}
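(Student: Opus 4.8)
The strategy is to follow the proof of Theorem~\ref{thm110530a} almost verbatim, replacing projective dimension by G-dimension throughout; the engine is Foxby equivalence relative to $\omega$, i.e.\ the quasi-inverse functors $\Otimes{\omega}{-}$ and $\Hom{\omega}{-}$ between the Auslander class $\Catac{\omega}$ and the Bass class $\Catbc{\omega}$.

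\emph{Setup.} I would set $M^{\dagger}:=\Hom{\omega}{M}$. Because $R$ is Cohen--Macaulay with a pointwise dualizing module $\omega$ and $M$ is finitely generated of locally finite G-injective dimension, the background results recalled in Section~\ref{sec110530a} place $M$ in the Bass class $\Catbc{\omega}$. Consequently $\Ext{i}{\omega}{M}=0$ for all $i>0$; the module $M^{\dagger}$ is finitely generated and lies in the Auslander class $\Catac{\omega}$; and the natural evaluation map $\Otimes{\omega}{M^{\dagger}}\to M$ is an isomorphism. I would then put $r:=\gdim_R(M^{\dagger})$ — this is the integer named in the statement, and it is finite — and observe that $M^{\dagger}\ne 0$ since $\Otimes{\omega}{M^{\dagger}}\cong M\ne 0$.

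\emph{Embedding the dual, then transporting back.} Next I would apply the G-dimension case of the Hochster--Yao embedding theorem (Theorem~\ref{thm110527a}, part~(1), in the locally finite form proved earlier in the paper) to $M^{\dagger}$, producing an $R$-regular sequence $\underline z=z_1,\dots,z_r$, integers $n_0,\dots,n_r\ge 0$ with $n_r\ge 1$, and a short exact sequence
$$0\to M^{\dagger}\to Z'\to N'\to 0, \qquad Z'=\textstyle\bigoplus_{i=0}^{r}\bigl(R/(\underline z_{[i]})\bigr)^{n_i},$$
with $\gdim_R(N')\le r$. Since $\gdim_R(N')<\infty$ we have $N'\in\Catac{\omega}$, so $\Tor{i}{\omega}{N'}=0$ for all $i>0$, and hence applying $\Otimes{\omega}{-}$ to the displayed sequence keeps it exact. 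Using $\Otimes{\omega}{M^{\dagger}}\cong M$ together with $\Otimes{\omega}{\bigl(R/(\underline z_{[i]})\bigr)}\cong \omega/(\underline z_{[i]})\omega$, and setting $Z:=\bigoplus_{i=0}^{r}\bigl(\omega/(\underline z_{[i]})\omega\bigr)^{n_i}$ and $N:=\Otimes{\omega}{N'}$, I would obtain the required exact sequence $0\to M\to Z\to N\to 0$. Foxby equivalence makes $N$ finitely generated and a member of $\Catbc{\omega}$, hence of locally finite G-injective dimension, and it makes the natural map $N'\to\Hom{\omega}{N}$ an isomorphism; therefore $\gdim_R\bigl(\Hom{\omega}{N}\bigr)=\gdim_R(N')\le r$. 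Since $M^{\dagger}=\Hom{\omega}{M}$, we have $r=\gdim_R\bigl(\Hom{\omega}{M}\bigr)$, which is what the statement asserts.

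\emph{Main obstacle.} I expect the substantive content to lie not in the homological manipulation — which is identical to the projective-dimension case already handled in Theorem~\ref{thm110530a} — but in the non-local bookkeeping: one needs the precise dictionary, to be recorded in Section~\ref{sec110530a}, identifying the finitely generated modules of locally finite G-injective dimension over a Cohen--Macaulay ring with pointwise dualizing module $\omega$ with the finitely generated modules of $\Catbc{\omega}$, together with the locally finite strengthening of Theorem~\ref{thm110527a}(1); and one must check that $r=\gdim_R(\Hom{\omega}{M})$ is genuinely finite. For the last point I would localize: $\Hom{\omega}{M}\in\Catac{\omega}$ forces $\gdim_{R_{\p}}\bigl((\Hom{\omega}{M})_{\p}\bigr)<\infty$ for every prime $\p$, and the Auslander--Bridger inequality bounds each such local G-dimension by $\depth R_{\p}$, so the global G-dimension is finite provided $R$ has finite Krull dimension (which a Cohen--Macaulay ring with pointwise dualizing module that arises in practice will have).
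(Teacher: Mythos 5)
Your overall strategy coincides with the paper's: use Lemma~\ref{lem110610a} to place $M$ in $\catb_\omega(R)$, pass to $\Hom\omega M\in\cata_\omega(R)$ by Foxby equivalence, feed this module into Theorem~\ref{thm110527a}\eqref{thm110527a1}, and then apply $\Otimes\omega-$ to transport the resulting short exact sequence back, using the two-of-three and Foxby equivalence facts to control $N$. That skeleton is exactly what Proof~\ref{proof110609a} does, and your homological bookkeeping in the ``Embedding the dual, then transporting back'' paragraph is sound.

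The one genuine gap is precisely at the point you flag as the ``main obstacle'': you need $r=\gdim_R(\Hom\omega M)$ to be an honest finite integer before Theorem~\ref{thm110527a}\eqref{thm110527a1} applies, and your proposed argument does not establish this in the stated generality. You argue that each local G-dimension $\gdim_{R_\p}\bigl((\Hom\omega M)_\p\bigr)$ is finite and bounded by $\depth R_\p$, and then invoke finite Krull dimension as a blanket assumption. But the hypothesis of the theorem is that $\omega$ is a \emph{pointwise} dualizing module, and the paper's Definition~\ref{notation08a} and Fact~\ref{fact110527a}(a) deliberately separate this from the honest dualizing case exactly because finite Krull dimension is not assumed; so the parenthetical ``which a Cohen--Macaulay ring with pointwise dualizing module that arises in practice will have'' cannot be used. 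Even with a finiteness-of-dimension assumption, passing from a uniform local bound to finiteness of the global G-dimension would still require a local-to-global principle for totally reflexive syzygies, which you do not supply. The paper closes this gap with Lemma~\ref{lem110610b}, whose key implication ``locally finite G-dimension implies finite G-dimension'' (for a finitely generated module over an arbitrary noetherian ring) is quoted from Avramov, Iyengar, and Lipman~\cite[Theorem~3.3]{avramov:rrc1}; this is the nontrivial input your argument is missing. A minor related misstep: there is no ``locally finite strengthening'' of Theorem~\ref{thm110527a}\eqref{thm110527a1} to invoke --- once $\gdim_R(\Hom\omega M)$ is known to be finite, the theorem applies verbatim.
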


\begin{thm}
\label{thm110530b}
Let $R$ be a  Cohen-Macaulay ring with a pointwise dualizing
module $\omega$. 
Let  $M$ a non-zero finitely generated $R$-module, and let $C$ be a semidualizing $R$-module.
Set $\cd=\Hom C\omega$. Then
\begin{enumerate}[\quad\rm(1)]
\item \label{thm110530b1}
If $M$ has locally finite $\catgic\text{-}\id$ and $M$ is in the Auslander class $\catac(R)$, then 
there exist an integer $r = \gdim(\Hom \omega{\Otimes CM})$, an
$R$-regular sequence $\underline z = z_1, \ldots , z_r$, non-negative integers $n_0, n_1, \ldots , n_r$ with $n_r\geq 1$, and an
exact sequence
$$0 \to M \to Z \to N \to 0$$
such that $N$ has locally finite $\catgic\text{-}\id$,
and $Z = \oplus_{i=0}^r(\cd/(\underline z_{[i]})\cd)^{n_i}$, $N \in \catac(R)$ and $\gdim(\Hom\omega{\Otimes CN})\leq r$.
\item \label{thm110530b2}
If $M$ has locally finite $\catic\text{-}\id$, then 
there exist an integer 
$$r = \pd_R(\Hom \omega{\Otimes CM})$$
an $R$-regular sequence $\underline z = z_1, \ldots , z_r$, non-negative integers $n_0, n_1, \ldots , n_r$ with $n_r\geq 1$, and an
exact sequence
$$0 \to M \to Z \to N \to 0$$
such that $N$ has locally finite $\catic\text{-}\id$,
and $Z = \oplus_{i=0}^r(\cd/(\underline z_{[i]})\cd)^{n_i}$  and $\pd_R(\Hom\omega{\Otimes CN})\leq r$.
\end{enumerate}
\end{thm}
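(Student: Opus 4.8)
The plan is to transfer the statement to the absolute case $C=R$ via the Foxby equivalence given by $\Otimes{C}{-}\colon\catac(R)\to\catbc(R)$ and its quasi-inverse $\Hom{C}{-}\colon\catbc(R)\to\catac(R)$, apply Theorems~\ref{thm110608a} and~\ref{thm110530a} to the transferred module, and then pull the resulting exact sequence back along $\Hom{C}{-}$. The first point is that $M\in\catac(R)$ in both parts: in part~(1) this is part of the hypothesis, and in part~(2) it follows because every object of $\catic$ (every $C$-injective module) lies in $\catac(R)$, the class $\catac(R)$ is closed under kernels of epimorphisms between its objects, and $\catac$-membership is a local condition on finitely generated modules---so a finite $\catic$-coresolution of each $M_\p$ forces $M_\p\in\catac(R_\p)$ for every prime $\p$, hence $M\in\catac(R)$. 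Now put $\widetilde M=\Otimes{C}{M}$; this is a nonzero finitely generated module lying in $\catbc(R)$ (nonzero because $\Hom{C}{\widetilde M}\cong M$). The localized Foxby comparisons $\gicid_{R_\p}(M_\p)=\gid_{R_\p}(\widetilde M_\p)$ in part~(1) and $\icid_{R_\p}(M_\p)=\id_{R_\p}(\widetilde M_\p)$ in part~(2), which hold since $M_\p\in\catac(R_\p)$, show that $\widetilde M$ has locally finite G-injective dimension, resp.\ locally finite injective dimension.

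Next I would apply Theorem~\ref{thm110608a} to $\widetilde M$ in part~(1), and Theorem~\ref{thm110530a} to $\widetilde M$ in part~(2). This produces the integer $r=\gdim_R(\Hom{\omega}{\widetilde M})=\gdim_R(\Hom{\omega}{\Otimes{C}{M}})$, resp.\ $r=\pd_R(\Hom{\omega}{\widetilde M})=\pd_R(\Hom{\omega}{\Otimes{C}{M}})$, an $R$-regular sequence $\underline z=z_1,\dots,z_r$, integers $n_0,\dots,n_r\geq 0$ with $n_r\geq 1$, and an exact sequence $0\to\widetilde M\to\widetilde Z\to\widetilde N\to 0$ with $\widetilde Z=\oplus_{i=0}^r(\omega/(\underline z_{[i]})\omega)^{n_i}$, where $\widetilde N$ again has locally finite G-injective, resp.\ injective, dimension and $\gdim_R(\Hom{\omega}{\widetilde N})\leq r$, resp.\ $\pd_R(\Hom{\omega}{\widetilde N})\leq r$. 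Since $R$ is Cohen--Macaulay and $\omega$ is pointwise dualizing, $\omega$ is locally maximal Cohen--Macaulay, so the $R$-regular sequence $\underline z$ is $\omega$-regular; peeling off the $z_i$ one at a time through the short exact sequences $0\to\omega/(\underline z_{[i-1]})\omega\xrightarrow{\,z_i\,}\omega/(\underline z_{[i-1]})\omega\to\omega/(\underline z_{[i]})\omega\to 0$, and using that $\omega\in\catbc(R)$ (as $\omega$ is pointwise dualizing) together with the closure of $\catbc(R)$ under cokernels of monomorphisms between its objects, shows $\omega/(\underline z_{[i]})\omega\in\catbc(R)$ for each $i$. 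Hence $\widetilde Z\in\catbc(R)$, and then $\widetilde N\in\catbc(R)$ as well, being the cokernel of the monomorphism $\widetilde M\hookrightarrow\widetilde Z$ of modules in $\catbc(R)$.

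Finally I would apply $\Hom{C}{-}$ to $0\to\widetilde M\to\widetilde Z\to\widetilde N\to 0$; exactness survives because $\ext^1_R(C,\widetilde M)=0$, and $\Hom{C}{\widetilde M}\cong M$. The single computation to record is that $\Hom{C}{\omega/(\underline z_{[i]})\omega}\cong\cd/(\underline z_{[i]})\cd$: this goes by induction on $i$, the case $i=0$ being $\Hom{C}{\omega}=\cd$, and the inductive step applying $\Hom{C}{-}$ to the short exact sequence displayed above---which stays exact since $\ext^1_R(C,\omega/(\underline z_{[i-1]})\omega)=0$---to identify $\Hom{C}{\omega/(\underline z_{[i]})\omega}$ with $\Hom{C}{\omega/(\underline z_{[i-1]})\omega}\,/\,z_i\Hom{C}{\omega/(\underline z_{[i-1]})\omega}\cong\cd/(\underline z_{[i]})\cd$. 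Therefore $\Hom{C}{\widetilde Z}\cong\oplus_{i=0}^r(\cd/(\underline z_{[i]})\cd)^{n_i}$, which is the module $Z$ of the statement, and with $N:=\Hom{C}{\widetilde N}\in\catac(R)$ we obtain an exact sequence $0\to M\to Z\to N\to 0$. Since $\widetilde N\in\catbc(R)$ we have $\Otimes{C}{N}\cong\widetilde N$, so $\gdim_R(\Hom{\omega}{\Otimes{C}{N}})=\gdim_R(\Hom{\omega}{\widetilde N})\leq r$ (resp.\ the same with $\pd_R$), and running the localized Foxby comparison backwards gives $\gicid_{R_\p}(N_\p)=\gid_{R_\p}(\widetilde N_\p)<\infty$ (resp.\ $\icid_{R_\p}(N_\p)=\id_{R_\p}(\widetilde N_\p)<\infty$) for all $\p$, so that $N$ has locally finite $\gicid$ (resp.\ $\icid$). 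The inequality $n_r\geq 1$ and the $R$-regularity of $\underline z$ are inherited unchanged from the absolute theorems.

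The step I expect to be the real obstacle is not any individual deduction but marshalling the Foxby-equivalence dictionary used at both ends of the argument, and over $R_\p$ as well as over $R$: the identities $\gicid_{R_\p}(M_\p)=\gid_{R_\p}((\Otimes{C}{M})_\p)$ and $\icid_{R_\p}(M_\p)=\id_{R_\p}((\Otimes{C}{M})_\p)$, the inclusion $\catic\subseteq\catac(R)$, and the facts that $\catac(R)$ and $\catbc(R)$ are local conditions on finitely generated modules that are closed under the relevant two-out-of-three operations. None of this is deep, but all of it must be in place in the preliminary section before either the hypothesis translations of the first paragraph or the conclusion translations of the last one can be carried out; granting it, what remains is bookkeeping together with the one inductive identification $\Hom{C}{\omega/(\underline z_{[i]})\omega}\cong\cd/(\underline z_{[i]})\cd$.
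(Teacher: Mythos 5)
Your proposal is correct and takes essentially the same route as the paper: transfer to the absolute ($C=R$) case via $\Otimes{C}{-}$, apply Theorems~\ref{thm110608a} and~\ref{thm110530a} to $\Otimes{C}{M}$, then pull back via $\Hom{C}{-}$ using Foxby equivalence. The paper isolates your two intermediate steps as standalone lemmas (Lemma~\ref{lem110613a} for the locally finite $\catg\catic$-injective comparison, and Lemma~\ref{lem110612b} for the identification $\Hom{C}{\omega/(\underline z_{[i]})\omega}\cong\cd/(\underline z_{[i]})\cd$, proved there with a Koszul-complex argument rather than your induction on $i$, though the content is the same); otherwise the structure and bookkeeping match.
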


We summarize the organization of this paper. 
Section~\ref{sec110530a} contains the proof of Theorem~\ref{thm110527b},
along with the necessary background material. 
Section~\ref{sec110608a} similarly treats Theorems~\ref{thm110608a} and~\ref{thm110530b};
this section also contains several lemmas about modules of locally finite G-injective dimension,
locally finite $\gc$-injective dimension, and locally finite $\catic$-injective dimension where $C$ is a semidualizing module,
as these notions have not been developed in the literature as best we know.\footnote{Ths can be
partially explained by the fact that, at the time of the writing of this article,
the localization question  for G-injective dimension is still not completely answered. Remark~\ref{disc110726a}.} 
Note that we do not develop the background material in the most optimal manner,
focusing instead on accessibility. To be specific, we present background material as it is needed 
in Sections~\ref{sec110530a} and~\ref{sec110608a} instead of putting it all in its own section.
Also, we feel that the definition of locally finite $\catgic\text{-}\id$ is a bit much
to swallow on the first bite, so we first discuss the special case of $\gid$ and work our way up.

\section{Proof of Theorem~\ref{thm110527b}} \label{sec110530a}

\begin{notation} \label{notn1xx}
We let $\pd_R(-)$ and $\id_R(-)$ denote
the classical projective dimension
and the classical injective dimension,
respectively.
\end{notation}

The study of semidualizing modules was initiated independently (with different names)
by Foxby~\cite{foxby:gmarm}, Golod~\cite{golod:gdagpi},
and Vasconcelos~\cite{vasconcelos:dtmc}.
For example, a finitely generated projective  $R$-module of rank 1 is semidualizing.

\begin{defn} \label{notation08a}
A finitely generated $R$-module $C$ is \emph{semidualizing} if 
the natural  map $R\to\Hom CC$ is an isomorphism, and
$\ext_R^{\geq 1}(C,C)=0$.
An $R$-module $C$  is  \emph{pointwise dualizing}
if it is semidualizing and  $\id_{R_{\m}}(C_{\m})<\infty$ for all maximal ideals $\m\subset R$.\footnote{In~\cite{hochster:etmf}
the term ``global canonical module'' is used in place of ``pointwise dualizing module''. Our choice follows the terminology of Grothendieck and Hartshorne~\cite{hartshorne:rad}.}
An $R$-module $C$  is  \emph{dualizing}
if it is semidualizing and $\id_R(C)<\infty$.
\end{defn}

\begin{fact}\label{fact110527a}
\

(a) It is straightforward to show that the semidualizing property is local.
That is, if $C$ is a semidualizing $R$-module, then for each multiplicatively closed subset $U\subseteq R$
the localization $U^{-1}C$ is a semidualizing $U^{-1}R$-module; and conversely,
if $M$ is  a finitely generated $R$-module such that $M_{\m}$ is a semidualizing $R_{\m}$-module for each maximal ideal $\m\subset R$,
then $M$ is a semidualizing $R$-module.
Similarly, if $C$ is a (pointwise) dualizing module for $R$, then for each multiplicatively closed subset $U\subseteq R$
the localization $U^{-1}C$ is a (pointwise) dualizing  module for $U^{-1}R$; and conversely,
if $M$ is  a finitely generated $R$-module such that $M_{\m}$ is a dualizing module for $R_{\m}$ for each maximal ideal $\m\subset R$,
then $M$ is a pointwise dualizing module for $R$.
Moreover, an $R$-module $M$ is a dualizing module for $R$ if and only if it is a pointwise dualizing module for $R$
and the Krull dimension of $R$ is finite.

(b) Let $C$ be a semidualizing $R$-module.
The isomorphism $R\cong\Hom CC$ implies that $\supp_R(C)=\spec(R)$ and
$\ass_R(C)=\ass(R)$. Thus, an element $x\in R$ is $C$-regular if and only if it is $R$-regular.
When $x$ is $R$-regular, the quotient $C/xC$ is a semidualizing $R/xR$-module.
Thus, by induction, a sequence $\underline z=z_1,\ldots,z_r\in R$ is $R$-regular if and only if it is $C$-regular;
see~\cite[Theorem 4.5]{frankild:rrhffd}.
Also, from~\cite[Proposition 3.1]{holm:fear} we know that, for all $R$-modules $M\neq 0$,
we have $\Otimes CM\neq 0\neq\Hom CM$.
\end{fact}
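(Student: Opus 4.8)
The plan is to derive every assertion from three routine principles for a finitely generated module $C$ over the noetherian ring $R$: (i) localization commutes with $\Hom$, $\ext$, and $\Tor$ in finitely generated modules, and it carries the natural evaluation map $R\to\Hom CC$ to the corresponding map over the localized ring; (ii) a finitely generated module is zero, and a homomorphism between finitely generated modules is an isomorphism, if and only if the same holds after localizing at every maximal ideal; and (iii) $\id_{U^{-1}R}(U^{-1}M)\leq\id_R(M)$ for every multiplicatively closed set $U$, while $\id_R(M)=\sup_\m\id_{R_\m}(M_\m)$ as $\m$ ranges over the maximal ideals. Granting these, for the first half of (a) I would apply (i) to the isomorphism $R\xrightarrow{\cong}\Hom CC$ and to the vanishing $\ext_R^{\geq 1}(C,C)=0$ to see at once that $U^{-1}R\to\Hom[U^{-1}R]{U^{-1}C}{U^{-1}C}$ is an isomorphism and $\ext_{U^{-1}R}^{\geq 1}(U^{-1}C,U^{-1}C)=0$, so $U^{-1}C$ is semidualizing over $U^{-1}R$; conversely, if $M$ is finitely generated with each $M_\m$ semidualizing over $R_\m$, then the natural map $R\to\Hom MM$ localizes at $\m$ to the evaluation map of $M_\m$, an isomorphism, hence $R\to\Hom MM$ is an isomorphism by (ii), while $\ext_R^i(M,M)$ is finitely generated with $(\ext_R^i(M,M))_\m\cong\ext_{R_\m}^i(M_\m,M_\m)=0$ for $i\geq 1$, so $\ext_R^{\geq 1}(M,M)=0$ again by (ii); thus $M$ is semidualizing.

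For the statements about (pointwise) dualizing modules I would combine the above with (iii). If $C$ is pointwise dualizing for $R$, then $U^{-1}C$ is semidualizing over $U^{-1}R$ by the previous paragraph, and for a maximal ideal $\q=\p U^{-1}R$ of $U^{-1}R$ we may choose a maximal ideal $\n\supseteq\p$ of $R$ and use $(U^{-1}R)_\q=R_\p$, $(U^{-1}C)_\q=C_\p$, and $\id_{R_\p}(C_\p)\leq\id_{R_\n}(C_\n)<\infty$ from (iii); hence $U^{-1}C$ is pointwise dualizing, and if in addition $\id_R(C)<\infty$ then $\id_{U^{-1}R}(U^{-1}C)\leq\id_R(C)<\infty$ handles the dualizing case. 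The converses over the local rings $R_\m$ are immediate from the definitions together with the semidualizing converse already established (a finitely generated $M$ with each $M_\m$ dualizing for $R_\m$ is semidualizing, and has $\id_{R_\m}(M_\m)<\infty$ at every maximal ideal, hence is pointwise dualizing). For the last claim of (a), note that each $C_\m$ is nonzero (being semidualizing over $R_\m$), so when $\id_{R_\m}(C_\m)<\infty$ Bass's formula gives $\id_{R_\m}(C_\m)=\depth R_\m$, and, since a local ring admitting a semidualizing module of finite injective dimension is Cohen-Macaulay, this equals $\dim R_\m$. Consequently, when $C$ is pointwise dualizing, (iii) yields $\id_R(C)=\sup_\m\id_{R_\m}(C_\m)=\sup_\m\dim R_\m=\dim R$, and both directions of ``$C$ is dualizing for $R$ if and only if $C$ is pointwise dualizing and $\dim R<\infty$'' fall out of this single identity.

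For part (b), localizing $R\cong\Hom CC$ at a prime $\p$ shows $C_\p\neq0$, since $C_\p=0$ would force $R_\p\cong\Hom[R_\p]{C_\p}{C_\p}=0$; thus $\supp_R(C)=\spec(R)$. Feeding $N=C$ into the standard identity $\ass_R(\Hom CN)=\supp_R(C)\cap\ass_R(N)$ then gives $\ass_R(R)=\ass_R(\Hom CC)=\spec(R)\cap\ass_R(C)=\ass_R(C)$. Since the zerodivisors on an $R$-module $L$ form $\bigcup_{\p\in\ass_R(L)}\p$, the equality $\ass_R(C)=\ass_R(R)$ shows that $x\in R$ is a non-zerodivisor on $C$ exactly when it is one on $R$, and $C/xC\neq0$ whenever $x$ lies in some maximal ideal by Nakayama applied to $C_\m\neq0$; the sequence version then follows by induction, as in \cite[Theorem 4.5]{frankild:rrhffd}. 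To see that $C/xC$ is semidualizing over $R/xR$ for $x$ an $R$-regular element, I would argue by base change: since $x$ is $C$-regular, $\Tor{i}{C}{R/xR}=0$ for $i\geq 1$, so an $R$-free resolution of $C$ remains a resolution after applying $\Otimes{-}{R/xR}$; applying $\Hom C{-}$ to $0\to C\xrightarrow{x}C\to C/xC\to0$ and using $\ext_R^{\geq 1}(C,C)=0$ gives $\Hom C{C/xC}\cong R/xR$ (via the natural map) and $\ext_R^{\geq 1}(C,C/xC)=0$; combining this with the resolution statement and adjunction yields $\Hom[R/xR]{C/xC}{C/xC}\cong\Hom C{C/xC}\cong R/xR$ via the natural map and $\ext_{R/xR}^{\geq 1}(C/xC,C/xC)\cong\ext_R^{\geq 1}(C,C/xC)=0$; induction then handles each $C/(\underline z_{[i]})C$. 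Finally, the non-vanishing $\Otimes CM\neq0\neq\Hom CM$ for $M\neq0$ is \cite[Proposition 3.1]{holm:fear}.

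The only step that is not pure localization bookkeeping is the final sentence of (a): it rests on Bass's formula for modules of finite injective dimension and on the well-known fact that a local ring carrying a semidualizing module of finite injective dimension is Cohen-Macaulay (equivalently, a dualizing complex concentrated in a single degree forces the Cohen-Macaulay property). With these in hand, all remaining assertions reduce to the compatibility of $\Hom$, $\ext$, $\Tor$, $\supp$, $\ass$, and $\id$ with localization, together with the local-to-global principles for finitely generated modules over a noetherian ring.
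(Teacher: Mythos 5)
Your argument is correct and is essentially the ``straightforward'' localization argument the paper has in mind: it states this Fact without proof apart from the citations to \cite[Theorem 4.5]{frankild:rrhffd} and \cite[Proposition 3.1]{holm:fear}, and your reduction of everything to the compatibility of $\Hom$, $\ext$, $\Tor$, $\supp$, $\ass$, and $\id$ with localization, plus the local-global principle for finitely generated modules, is exactly the intended route. The one place requiring genuine input beyond bookkeeping --- the final sentence of (a), where you invoke Bass's formula and the fact that a local ring admitting a semidualizing module of finite injective dimension is Cohen-Macaulay to get $\id_{R_{\m}}(C_{\m})=\dim R_{\m}$ and hence $\id_R(C)=\dim R$ --- is correctly identified and correctly deployed.
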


The next categories were introduced by 
Foxby~\cite{foxby:gdcmr}
when $C$ is dualizing, and 
by Vasconcelos~\cite[\S 4.4]{vasconcelos:dtmc} for arbitrary $C$, with different notation.

\begin{defn} \label{notation08d}
Let $C$ be a
semidualizing $R$-module.  
The \emph{Auslander class} of $C$ is the class $\catac(R)$
of $R$-modules $M$ such that 
$\tor^R_{\geq 1}(C,M)=0=\ext_R^{\geq 1}(C,C\otimes_R M)$, and
the natural map $M\to\Hom C{C\otimes_R M}$ is an isomorphism.
The \emph{Bass class} of $C$ is the class $\catbc(R)$
of $R$-modules $N$ such that 
$\ext_R^{\geq 1}(C,M)=0=\tor^R_{\geq 1}(C,\Hom CM)$, and 
the natural evaluation map $C\otimes_R\Hom CN\to N$ is an isomorphism.
\end{defn}

\begin{disc}\label{disc110530a}
It is straightforward to show that the Auslander and Bass classes for $C=R$ are trivial:
$\cata_{R}(R)$ and $\catb_R(R)$ both contain all $R$-modules.
\end{disc}

The following notion was  introduced and studied  by
Holm and J\o rgensen~\cite{holm:smarghd}
and White~\cite{white:gpdrsm}.
Note that in the special case $C=R$, we recover the class of projective $R$-modules and the classical projective dimension.

\begin{defn} \label{notation08bxy}
Let $C$ be a semidualizing $R$-module.
Let
$\catpc(R)$ denote the class of modules $M\cong P\otimes_R C$ with $P$  projective.
Modules in $\catpc(R)$  are called \emph{$C$-projective}.
We let $\catpc\text{-}\pd_R(-)$
denote the homological dimension obtained from resolutions  in $\catpc(R)$,
with the convention that $\pcpd_R(0)=-\infty$.\footnote{We observe the same convention for any homological 
dimension of the zero module.}
\end{defn}

\begin{fact} \label{disc01xyz}
Let  $C$ be 
a semidualizing $R$-module.  
The Bass class $\catbc(R)$ contains
all $R$-modules  of finite $\pcpd$; 
see~\cite[Lemmas 4.1 and 5.1, and Corollary 6.3]{holm:fear}.
Given an $R$-module $M$, one has
$\pcpd_R(M)=\pd_R(\Hom CM)$
and 
$\pd_R(M)=\pcpd_R(\Otimes CM)$
by~\cite[Theorem 2.11]{takahashi:hasm}.
In particular, one has
$\pcpd_R(M)<\infty$ if and only if
$\pd_R(\Hom CM)<\infty$,
and one has
$\pd_R(M)<\infty$ if and only if
$\pcpd_R(\Otimes CM)<\infty$.
\end{fact}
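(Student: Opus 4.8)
The plan is to derive everything from Foxby equivalence: the functors $\Otimes C{-}$ and $\Hom C{-}$ restrict to a pair of quasi-inverse equivalences between $\catac(R)$ and $\catbc(R)$, which is immediate from Definition~\ref{notation08d} and the unit-counit relations of the adjunction $(\Otimes C{-},\Hom C{-})$. The idea is that (co)resolutions can be transported back and forth across this equivalence while preserving length. Throughout I will use the nondegeneracy from Fact~\ref{fact110527a}(b): $\Otimes CN\neq 0\neq\Hom CN$ for every nonzero $R$-module $N$.

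First I would prove $\catpc(R)\subseteq\catbc(R)$ and, more generally, that every module of finite $\pcpd$ lies in $\catbc(R)$. For a $C$-projective module $\Otimes PC$ with $P$ projective, the three defining conditions of $\catbc(R)$ are checked directly: $\ext_R^i(C,\Otimes PC)=0$ for $i\geq 1$ because $\ext_R^{\geq 1}(C,C)=0$ while $\ext_R^i(C,-)$ is additive and commutes with arbitrary direct sums, as $C$ is finitely presented over the noetherian ring $R$; next, the natural map $\Otimes P{\Hom CC}\to\Hom C{\Otimes PC}$ is an isomorphism, by additivity, reduction to $P=R$, and $R\cong\Hom CC$, so that $\Hom C{\Otimes PC}\cong P$, giving $\tor^R_{\geq 1}(C,\Hom C{\Otimes PC})=0$ and identifying the evaluation map with the swap isomorphism $\Otimes CP\cong\Otimes PC$. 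Next, $\catbc(R)$ is closed under cokernels of monomorphisms: for an exact sequence $0\to K\to L\to M\to 0$ with $K,L\in\catbc(R)$, chasing the long exact sequences of $\ext_R^\bullet(C,-)$ and of $\tor^R_\bullet(C,\Hom C{-})$ (the latter using that $\ext_R^1(C,K)=0$, so $\Hom C{-}$ is exact on the sequence) and applying the five lemma to the evaluation maps shows $M\in\catbc(R)$. Induction on $\pcpd_R(M)$, applied to a $\catpc(R)$-resolution, then yields the first assertion of the Fact.

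For the equalities I would argue as follows. If $\pcpd_R(M)=r<\infty$, take a $\catpc(R)$-resolution $0\to\Otimes{P_r}C\to\cdots\to\Otimes{P_0}C\to M\to 0$; all of its syzygies have finite $\pcpd$, hence lie in $\catbc(R)$, so $\ext_R^{\geq 1}(C,-)$ vanishes on them and applying $\Hom C{-}$ preserves exactness; since each $\Hom C{\Otimes{P_i}C}\cong P_i$ is projective, this produces a projective resolution of $\Hom CM$ of length $r$, so $\pd_R(\Hom CM)\leq r$. Conversely, suppose $\pd_R(\Hom CM)<\infty$ for an arbitrary $M$. Finite projective dimension forces $\Hom CM\in\catac(R)$ (the Auslander class contains the projectives and is closed under cokernels of monomorphisms, here using Fact~\ref{fact110527a}(b) to kill the relevant $\tor^R_1$), so by Foxby equivalence $L:=\Otimes C{\Hom CM}\in\catbc(R)$ and the unit $\Hom CM\to\Hom CL$ is an isomorphism. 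A triangle-identity argument then shows $\Hom C{\xi_M}$ is an isomorphism, where $\xi_M\colon L\to M$ is the counit (evaluation) map. Factoring $\xi_M$ as $L\twoheadrightarrow I\hookrightarrow M$ and applying $\Hom C{-}$, the isomorphism $\Hom C{\xi_M}$ together with left-exactness of $\Hom C{-}$ forces $\Hom C{\ker\xi_M}=0$, hence $\ker\xi_M=0$; then from the exact sequence $0\to L\to M\to\coker\xi_M\to 0$, the vanishing $\ext_R^1(C,L)=0$ (as $L\in\catbc(R)$) and the isomorphism $\Hom C{\xi_M}$ force $\Hom C{\coker\xi_M}=0$, hence $\coker\xi_M=0$. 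Thus $\xi_M$ is an isomorphism, so $M\cong L\in\catbc(R)$; applying $\Otimes C{-}$ to a projective resolution of $\Hom CM$ of length $\pd_R(\Hom CM)$ (which stays exact after tensoring, since $\tor^R_{\geq 1}(C,-)$ vanishes on $\Hom CM$ and on its syzygies, the latter again lying in $\catac(R)$) yields a $\catpc(R)$-resolution of $M$ of that length, so $\pcpd_R(M)\leq\pd_R(\Hom CM)$. Hence $\pcpd_R(M)=\pd_R(\Hom CM)$. The identity $\pd_R(M)=\pcpd_R(\Otimes CM)$ is proved symmetrically, with $\catac(R)$, $\Otimes C{-}$, and the projective modules playing the roles of $\catbc(R)$, $\Hom C{-}$, and $\catpc(R)$, using the nondegeneracy $\Otimes CN\neq 0$ for $N\neq 0$ on the $\tor$ side. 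The two ``if and only if'' statements follow immediately.

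The step I expect to be the main obstacle is the converse direction of each equality, namely that finiteness of the relevant homological dimension of $\Hom CM$ (resp.\ of $\Otimes CM$) forces $M$ itself into $\catbc(R)$ (resp.\ $\catac(R)$), which is what makes Foxby equivalence applicable. This cannot be read off the resolutions directly; it requires the unit-counit bootstrap above, and crucially the fact that $\Hom C{-}$ and $\Otimes C{-}$ do not annihilate nonzero modules, so that the kernel and cokernel of the (co)unit can be shown to vanish.
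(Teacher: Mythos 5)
Your argument is correct: the containment of modules of finite $\pcpd$ in $\catbc(R)$, the transport of resolutions across the adjunction, and the unit--counit bootstrap (using that $\Hom C{-}$ and $\Otimes C{-}$ kill no nonzero module) to place $M$ in the relevant Auslander or Bass class are exactly the standard arguments, and you correctly flag the one genuinely delicate step, namely the converse implications. The paper offers no proof of this Fact, citing instead \cite[Lemmas 4.1, 5.1, Corollary 6.3]{holm:fear} and \cite[Theorem 2.11]{takahashi:hasm}, and your proof is essentially the one found in those sources.
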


\begin{proofn}[Proof of Theorem~\ref{thm110527b}\eqref{thm110527b2}]
\label{proof110530a}
Assume that $M$ is a non-zero finitely generated $R$-module such that
$\pcpd_R(M) = r < \infty$.
Then Fact~\ref{disc01xyz} implies that
$\pd_R(\Hom CM)=r<\infty$.
Since $M\neq 0$,  we have $\Hom CM\neq 0$, by Fact~\ref{fact110527a}(b), so
Theorem~\ref{thm110527a}\eqref{thm110527a2}
provides an $R$-regular sequence $\underline z = z_1, \ldots, z_r$,
integers $n_0, n_1, \ldots , n_r  \geq 0$ with $n_r\geq 1$, and an exact sequence
\begin{equation}\label{eq110530a}
0 \to \Hom CM \to Z' \to N' \to 0
\end{equation}
with $Z' = \oplus_{i=0}^r(R/(\underline z_{[i]}))^{n_i}$ and $\pd_R(N') \leq r$.
In particular, we have $N'\in\catac(R)$ which implies that $\Tor 1C{N'}=0$.
Thus, an application of $\Otimes C-$ to the sequence~\eqref{eq110530a}
yields the next exact sequence:
\begin{equation}\label{eq110530b}
0 \to \Otimes{C}{\Hom CM} \to \Otimes{C}{Z'} \to \Otimes{C}{N'} \to 0.
\end{equation}
Fact~\ref{disc01xyz} implies that $M\in\catbc(R)$,
so we have $\Otimes{C}{\Hom CM}\cong M$. 
The equality $Z' = \oplus_{i=0}^r(R/(\underline z_{[i]}))^{n_i}$ implies that
$Z:=\Otimes{C}{Z'} = \oplus_{i=0}^r(C/(\underline z_{[i]})C)^{n_i}$.
Because of Fact~\ref{disc01xyz}, the condition $\pd_R(N') \leq r$ implies that
$\pcpd_R(\Otimes C{N'})\leq r$.
Thus, with $N:=\Otimes{C}{N'}$, the sequence~\eqref{eq110530b}
satisfies the conclusion of Theorem~\ref{thm110527b}\eqref{thm110527b2}.
\qed
\end{proofn}

The remainder of the proof of Theorem~\ref{thm110527b} is similar to the above proof, 
but it requires a bit more technology.

\begin{fact} \label{disc01xy}
Let  $C$ be 
a semidualizing $R$-module.  

(a)
The Auslander class $\catac(R)$ contains
all $R$-modules of finite projective dimension,
and the Bass class $\catbc(R)$ contains
all $R$-modules of finite injective dimension; 
see~\cite[Lemmas 4.1 and 5.1, and Corollary 6.3]{holm:fear}.
Moreover, it is straightforward to show that  the Bass  class satisfies the following local-global principal:
For an $R$-module $M$, the following conditions are equivalent:
\begin{enumerate}[(i)]
\item $M\in\catbc(R)$;
\item $U^{-1}M\in\catb_{U^{-1}C}(U^{-1}R)$ for each multiplicatively closed subset $U\subseteq R$;
\item $M_{\p}\in\catb_{C_{\p}}(R_{\p})$ for each $\p\in\spec(R)$; and
\item $M_{\m}\in\catb_{C_{\m}}(R_{\m})$ for each maximal ideal $\m\in\supp_R(M)$.
\end{enumerate}
It follows that $\catbc(R)$ contains every $R$-module of locally finite injective dimension.
The Auslander class satisfies an analogous local-global principal.

(b)
The Auslander and Bass  classes also satisfy the two-of-three property
by~\cite[Corollary 6.3]{holm:fear}.
That is, given a short exact sequence $0\to M'\to M\to M''\to 0$ of $R$-module homomorphisms,
if two of the modules in the sequence are in $\catac(R)$, then so is the third module,
and similarly for $\catbc(R)$.

(c)
From~\cite[(2.8)]{takahashi:hasm} we know that an $R$-module $M$
is in $\catbc(R)$ if and only if $\Hom CM\in\catac(R)$,
and that $M\in\catac(R)$ if and only if $C\otimes_RM\in\catbc(R)$.
This is known as \emph{Foxby equivalence}.
\end{fact}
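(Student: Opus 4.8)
The plan is to supply the one genuinely new ingredient of this Fact, namely the local-global principle asserted in part~(a); the first sentence of~(a) and all of parts~(b) and~(c) are quoted verbatim from \cite{holm:fear} and \cite{takahashi:hasm} and require no further argument. I would prove the equivalence of conditions~(i)--(iv) for the Bass class and then read off the statement about locally finite injective dimension; the Auslander-class version is obtained by the same argument, replacing the functor $\Hom C{-}$ throughout by $\Otimes C{-}$ and the evaluation map $\Otimes C{\Hom CM}\to M$ by the natural map $M\to\Hom C{\Otimes CM}$.

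The crux is the compatibility of the three defining conditions of $\catbc(R)$ with localization, which is available because $C$ is finitely generated (hence finitely presented) over the noetherian ring $R$. For any multiplicatively closed $U\subseteq R$ one has natural isomorphisms $U^{-1}\ext^i_R(C,N)\cong\ext^i_{U^{-1}R}(U^{-1}C,U^{-1}N)$, $U^{-1}\tor^R_i(C,N)\cong\tor^{U^{-1}R}_i(U^{-1}C,U^{-1}N)$, and $U^{-1}\Hom CN\cong\Hom[U^{-1}R]{U^{-1}C}{U^{-1}N}$, and, by naturality, the evaluation map for $N$ localizes to the evaluation map for $U^{-1}N$ over $U^{-1}R$. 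Granting this, (i)$\Rightarrow$(ii) is immediate, (ii)$\Rightarrow$(iii) is the special case $U=R\ssm\p$, and (iii)$\Rightarrow$(iv) is trivial. For (iv)$\Rightarrow$(i) I would use that each of the $R$-modules $\ext^i_R(C,M)$ with $i\geq 1$, $\tor^R_i(C,\Hom CM)$ with $i\geq 1$, and the kernel and cokernel of the evaluation map $\Otimes C{\Hom CM}\to M$ has support contained in $\supp_R(M)$ — here one invokes $\supp_R(C)=\spec(R)$ together with $\supp_R(\Hom CM)\subseteq\supp_R(M)$ and $\supp_R(\Otimes C{\Hom CM})\subseteq\supp_R(M)$. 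Such a module is zero precisely when its localization at every maximal ideal $\m\in\supp_R(M)$ is zero; for such $\m$ this localization is the corresponding object built from $M_{\m}$ over $R_{\m}$, which is well-behaved because $M_{\m}\in\catb_{C_{\m}}(R_{\m})$ by hypothesis, while for $\m\notin\supp_R(M)$ the localization vanishes trivially since $M_{\m}=0$. Hence $M\in\catbc(R)$.

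The final assertion of~(a) is then immediate: if $M$ has locally finite injective dimension, then for every maximal ideal $\m\in\supp_R(M)$ the nonzero module $M_{\m}$ has finite injective dimension over $R_{\m}$, so $M_{\m}\in\catb_{C_{\m}}(R_{\m})$ by the (cited) first part of~(a), and condition~(iv) gives $M\in\catbc(R)$. The Auslander statement follows by the parallel argument sketched above.

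The only step requiring real care — and where I expect to spend the most effort — is verifying that the natural evaluation map (and, in the Auslander setting, the natural map $M\to\Hom C{\Otimes CM}$) genuinely commutes with localization and that its kernel and cokernel have support inside $\supp_R(M)$; the rest reduces to the standard flat-base-change isomorphisms for $\ext$, $\tor$, and $\Hom$ of a finitely presented module over a noetherian ring.
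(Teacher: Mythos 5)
Your proposal is correct and matches the paper, which offers no proof of this Fact beyond the citations and the remark that the local-global principle is ``straightforward to show'': your combination of flat base change for $\ext$, $\tor$, and $\Hom$ of the finitely presented module $C$ over the noetherian ring $R$, together with the observation that all the obstruction modules have support inside $\supp_R(M)$ (so that checking maximal ideals in $\supp_R(M)$ suffices for (iv)$\Rightarrow$(i)), is exactly the intended routine verification. The only point you flag as delicate---that the evaluation map $\Otimes C{\Hom CM}\to M$ and the natural map $M\to\Hom C{\Otimes CM}$ commute with localization---is indeed just a direct diagram chase once the standard isomorphism $U^{-1}\Hom CN\cong\Hom[U^{-1}R]{U^{-1}C}{U^{-1}N}$ is in hand.
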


\begin{defn} \label{notation08bx}
Let $C$ be a semidualizing $R$-module.
A finitely generated $R$-module $G$ is
\emph{totally $C$-reflexive} if 
the natural  map $G\to\Hom{\Hom GC}{C}$ is an isomorphism, and
$\Ext{\geq 1}{G}{C}=0=\Ext{\geq 1}{\Hom{G}{C}}{C}$.
Let $\catgc(R)$ denote the class of totally $C$-reflexive $R$-modules,
and set $\catg(C)=\catgc(R)\cap\catbc(R)$.
In the case $C=R$ we use the more common terminology
``totally reflexive''
and the notation $\catg(R)=\catg_R(R)=\catg(R)\cap\catb_R(R)$.
We  abbreviate as follows:
\begin{align*}
\gcdim_R(-)
&=\text{the homological dimension obtained from resolutions  in $\catgc(R)$}
\\
\catg(C)\text{-}\pd_R(-)
&=\text{the homological dimension obtained from resolutions  in $\catg(C)$}
\\
\gdim_R(-)
&=\text{the homological dimension obtained from resolutions  in $\catg(R)$.}
\end{align*}
\end{defn}

The following facts are included for perspective.

\begin{fact}
Let $C$ be a semidualizing $R$-module, and let $M$ be a finitely generated $R$-module.
Because of the containments $\catpc(R)\subseteq\catg(C)\subseteq\catgc(R)$,
one has $\gcdim_R(M)\leq\catg(C)\text{-}\pd_R(M)\leq\pcpd_R(M)$ with equality to the left of any finite quantity.
In particular, the case $C=R$ says that
$\gdim_R(M)=\catg(R)\text{-}\pd_R(M)\leq\pd_R(M)$
since $\catg(R)=\catg_R(R)$, with equality holding when $\pd_R(M)<\infty$.
\end{fact}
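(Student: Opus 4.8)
The plan is to establish the three containments by a short direct computation with $C$ together with closure properties, read off the displayed chain of inequalities formally, and then obtain the equalities in the finite range by pushing resolutions back and forth across Foxby equivalence, reducing everything to the classical Auslander--Bridger equality $\gdim_R(L)=\pd_R(L)$. The case $C=R$ will then drop out.

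First I would prove the containments. The inclusion $\catg(C)\subseteq\catgc(R)$ is immediate, since $\catg(C)$ is by definition $\catgc(R)\cap\catbc(R)$. For $\catpc(R)\subseteq\catg(C)$ I would check that $C$ itself lies in $\catgc(R)\cap\catbc(R)$: from $R\cong\Hom CC$ one gets $\Hom{\Hom CC}{C}\cong\Hom RC\cong C$ with the biduality map the identity, and $\ext_R^{\geq 1}(C,C)=0=\ext_R^{\geq 1}(R,C)$, so $C\in\catgc(R)$; likewise the evaluation map $\Otimes C{\Hom CC}\cong\Otimes CR\to C$ is an isomorphism while $\tor^R_{\geq 1}(C,R)=0=\ext_R^{\geq 1}(C,C)$, so $C\in\catbc(R)$ (the latter also follows from Fact~\ref{disc01xyz}, since $\pcpd_R(C)=0$). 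Since each of $\catgc(R)$ and $\catbc(R)$ is closed under finite direct sums and direct summands, every finitely generated $C$-projective module lies in $\catgc(R)\cap\catbc(R)=\catg(C)$, which is the inclusion $\catpc(R)\subseteq\catg(C)$ needed for the finitely generated module $M$. The inequalities $\gcdim_R(M)\leq\catg(C)\text{-}\pd_R(M)\leq\pcpd_R(M)$ then hold formally, as any resolution by modules from a smaller class is a resolution by modules from a larger one.

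Next I would handle the equality $\gcdim_R(M)=\catg(C)\text{-}\pd_R(M)$ when the right-hand side is finite. Since a $\catg(C)$-resolution of $M$ has all its terms in $\catg(C)\subseteq\catbc(R)$, the two-of-three property of $\catbc(R)$ (Fact~\ref{disc01xy}(b)) forces $M\in\catbc(R)$; set $L:=\Hom CM\in\catac(R)$ (Fact~\ref{disc01xy}(c)), so that $\Otimes CL\cong M$. Choose a resolution of $L$ by finitely generated projectives; its syzygies lie in $\catac(R)$ (two-of-three again), so applying $\Otimes C-$ preserves exactness and produces a $\catpc(R)$-resolution of $M$ whose $m$th syzygy, with $m:=\gcdim_R(M)$, is $\Otimes C{L_m}$ for $L_m\in\catac(R)$ the $m$th syzygy of $L$. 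Because $\catpc(R)\subseteq\catgc(R)$ and the class of totally $C$-reflexive modules is resolving~\cite{golod:gdagpi}, this $\catpc(R)$-resolution --- viewed as a $\catgc(R)$-resolution --- has $\Otimes C{L_m}$ totally $C$-reflexive; and $\Otimes C{L_m}\in\catbc(R)$ because $L_m\in\catac(R)$, so $\Otimes C{L_m}\in\catg(C)$. Truncating the $\catpc(R)$-resolution at $\Otimes C{L_m}$ exhibits a $\catg(C)$-resolution of $M$ of length $m$, hence $\catg(C)\text{-}\pd_R(M)\leq m=\gcdim_R(M)$, and equality follows.

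Finally, suppose $\pcpd_R(M)<\infty$; the previous paragraph already gives $\gcdim_R(M)=\catg(C)\text{-}\pd_R(M)$, so it remains to show $\catg(C)\text{-}\pd_R(M)=\pcpd_R(M)$. Here I would use the $\catg(C)$-analogue of the formula in Fact~\ref{disc01xyz}, namely $\catg(C)\text{-}\pd_R(M)=\gdim_R(\Hom CM)$ for $M\in\catbc(R)$, proved by transporting $\catg(C)$-resolutions of $M$ and totally reflexive resolutions of $\Hom CM$ back and forth along Foxby equivalence~\cite{white:gpdrsm}; combined with $\pcpd_R(M)=\pd_R(\Hom CM)$ (Fact~\ref{disc01xyz}) and the classical Auslander--Bridger equality $\gdim_R(\Hom CM)=\pd_R(\Hom CM)$, valid since $\pd_R(\Hom CM)<\infty$~\cite{auslander:smt}, this yields $\catg(C)\text{-}\pd_R(M)=\pcpd_R(M)$. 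The case $C=R$ is then immediate: $\catbc(R)$ consists of all $R$-modules and $\catg(R)=\catg_R(R)$ by Remark~\ref{disc110530a}, so $\catg(R)\text{-}\pd_R=\gdim_R$ by definition, $\catpc(R)=\catp(R)$, and the general statement specializes to $\gdim_R(M)=\catg(R)\text{-}\pd_R(M)\leq\pd_R(M)$ with equality for $\pd_R(M)<\infty$. I expect the one genuine obstacle to be the input cited from~\cite{white:gpdrsm}, that Foxby equivalence restricts to an equivalence between the totally reflexive modules contained in $\catac(R)$ and the class $\catg(C)$; this is precisely the step where the semidualizing hypotheses on $C$ are used, while everything else reduces to a direct computation or a formal manipulation of resolutions.
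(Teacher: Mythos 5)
The paper states this Fact without proof (it is presented ``for perspective''), so there is no official argument to compare against; your proposal is correct, and uses the same machinery the paper relies on elsewhere (Foxby equivalence, the resolving property of $\catgc(R)$, and the Auslander--Bridger equality). One remark on economy: the central identity you import from~\cite{white:gpdrsm}, namely $\catg(C)\text{-}\pd_R(M)=\gcdim_R(M)=\gdim_R(\Hom CM)$ for $M\in\catbc(R)$ with $\catg(C)\text{-}\pd_R(M)<\infty$, is stated verbatim as Lemma~\ref{lem0701x} in the paper, immediately after this Fact. Citing that lemma together with $\pcpd_R(M)=\pd_R(\Hom CM)$ from Fact~\ref{disc01xyz} and the classical Auslander--Bridger equality would let you collapse your second and third paragraphs into a few lines; in particular your hands-on syzygy argument showing $\gcdim_R(M)=\catg(C)\text{-}\pd_R(M)$ is really a re-derivation of one implication in Lemma~\ref{lem0701x}. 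It is sound as written, but redundant given what the paper has available. Two small points to be careful about: the containment $\catpc(R)\subseteq\catg(C)$ must, as you note, be read in the finitely generated setting (since totally $C$-reflexive modules are finitely generated by definition), and the argument that $\catg(C)\text{-}\pd_R(M)<\infty$ forces $M\in\catbc(R)$ needs the iterated two-of-three step you gesture at --- both are fine, just worth stating crisply.
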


The next  lemma explains how these homological dimensions are connected.

\begin{lem}[\protect{\cite[Lemma 2.9]{sather:tate1}}] \label{lem0701x}
Let $C$ be a semidualizing $R$-module. For a finitely generated $R$-module $M$,
the following conditions are equivalent:
\begin{enumerate}[\quad\rm(i)]
\item \label{lem0701xi}
$\opg(C)\text{-}\pd_R(M)<\infty$;
\item \label{lem0701xii}
$\gcdim_R(M)<\infty$ and $M\in\catbc(R)$; and
\item \label{lem0701xiii}
$\gdim_R(\hom CM)<\infty$ and $M\in\catbc(R)$.
\end{enumerate}
When these conditions are satisfied, we have
\begin{align*}
\opg(C)\text{-}\pd_R(M)
&=\gcdim_R(M)
=\gdim_R(\hom CM).
\end{align*}
\end{lem}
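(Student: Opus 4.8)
The engine is Foxby equivalence (Fact~\ref{disc01xy}(c)): $\Otimes C-$ and $\Hom C-$ are quasi-inverse equivalences between $\catac(R)$ and $\catbc(R)$, and under them the projective $R$-modules correspond to the modules of $\catpc(R)$. The first thing I would record is the compatible statement for the Gorenstein classes, which is where derived functors enter: \emph{a finitely generated $R$-module $G$ is totally reflexive and lies in $\catac(R)$ if and only if $\Otimes CG\in\catg(C)$, in which case $\Hom C{\Otimes CG}\cong G$; dually, $H\in\catg(C)$ implies that $\Hom CH$ is totally reflexive and lies in $\catac(R)$.} For $G\in\catac(R)$ one has $\Lotimes CG\simeq\Otimes CG$, so Hom--tensor adjunction together with the semidualizing isomorphism $\Rhom CC\simeq R$ gives $\Rhom[R]{\Otimes CG}{C}\simeq\Rhom[R]{G}{\Rhom CC}\simeq\Rhom GR$; reading off cohomology yields $\ext_R^{\geq 1}(\Otimes CG,C)=0$ and $\Hom{\Otimes CG}{C}\cong\Hom GR$, and the remaining reflexivity and vanishing conditions follow the same way (compare the Foxby-equivalence statements for Gorenstein homological dimensions in \cite{white:gpdrsm,takahashi:hasm}).

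Granting this, \eqref{lem0701xi}$\Rightarrow$\eqref{lem0701xii} is quick: from a $\catg(C)$-resolution $0\to G_m\to\cdots\to G_0\to M\to 0$ with $m=\catg(C)\text{-}\pd_R(M)$, the inclusion $\catg(C)\subseteq\catgc(R)$ gives $\gcdim_R(M)\leq m$, and walking down the short exact sequences of syzygies---two of whose terms lie in $\catbc(R)$ at each stage---the two-of-three property (Fact~\ref{disc01xy}(b)) forces $M\in\catbc(R)$. For \eqref{lem0701xii}$\Rightarrow$\eqref{lem0701xi} I would argue by syzygies: since $M\in\catbc(R)$, the module $H:=\Hom CM$ lies in $\catac(R)$, and applying $\Otimes C-$ to a free resolution of $H$ produces a resolution $\cdots\to Q_1\to Q_0\to M\to 0$ with each $Q_i\in\catpc(R)$---exactness is preserved because $\tor^R_{\geq 1}(C,-)$ vanishes on the syzygies of $H$ (all in $\catac(R)$) and because $\Otimes C{\Hom CM}\cong M$. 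If $m=\gcdim_R(M)$, the $m$-th syzygy $K$ of this resolution is totally $C$-reflexive by the syzygy characterization of $\gcdim$, and it has the form $\Otimes C{G'}$ with $G'\in\catac(R)$, so $K\in\catbc(R)$ by Foxby equivalence; hence $K\in\catgc(R)\cap\catbc(R)=\catg(C)$ and $\catg(C)\text{-}\pd_R(M)\leq m<\infty$. As $\gcdim_R(M)\leq\catg(C)\text{-}\pd_R(M)$ always, this also yields $\catg(C)\text{-}\pd_R(M)=\gcdim_R(M)$.

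The equivalence with \eqref{lem0701xiii} and the last equality come from transporting resolutions through Foxby equivalence. Assuming \eqref{lem0701xi}, all terms and all syzygies of a $\catg(C)$-resolution of $M$ lie in $\catbc(R)$ (two-of-three again), so $\ext_R^1(C,-)$ vanishes on each of them and $\Hom C-$ keeps the short exact sequences exact; by the first paragraph the resulting resolution of $\Hom CM$ has totally reflexive terms, so $\gdim_R(\Hom CM)\leq\catg(C)\text{-}\pd_R(M)$. Conversely, assuming \eqref{lem0701xiii}, set $n=\gdim_R(\Hom CM)$ and pick a free resolution of $H=\Hom CM\in\catac(R)$ whose $n$-th syzygy $G$ is totally reflexive; by two-of-three for $\catac(R)$ every term of $0\to G\to F_{n-1}\to\cdots\to F_0\to H\to 0$ lies in $\catac(R)$, so applying $\Otimes C-$ produces, using $\Otimes CH\cong M$ and the first paragraph, a $\catg(C)$-resolution of $M$ of length $n$; hence $\catg(C)\text{-}\pd_R(M)\leq n$. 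Combining the two directions gives $\catg(C)\text{-}\pd_R(M)=\gdim_R(\Hom CM)$, and together with the previous paragraph all three invariants coincide.

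The main obstacle is the compatibility asserted in the first paragraph: that Foxby equivalence genuinely interchanges $\catg(C)$ with the totally reflexive $R$-modules lying in $\catac(R)$. The Ext-vanishing and biduality conditions reduce, via Hom--tensor adjunction and $\Rhom CC\simeq R$, to the analogous conditions for the ordinary $R$-dual, but one has to make sure these reductions take place at the level of honest modules---legitimate precisely because everything in sight lies in $\catac(R)$, so the offending Tor and Ext vanish---and the delicate step is the vanishing of $\ext_R^{\geq 1}$ of the $C$-dual, which is exactly where membership in $\catac(R)$ is essential. The two ancillary inputs I would cite rather than reprove are the syzygy characterization of $\gcdim$ (the analogue for $\gcdim$ of the statement that the $n$-th syzygy in a projective resolution is totally reflexive once $\gdim\leq n$) and the fact that a finitely generated module of finite G-dimension $n$ admits a length-$n$ resolution by totally reflexive modules.
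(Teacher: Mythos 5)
The paper does not prove this lemma; it is cited verbatim from \cite[Lemma~2.9]{sather:tate1}, so there is no in-paper argument to compare against. Judged on its own, your reconstruction is essentially the standard proof and is logically sound: transport resolutions across Foxby equivalence, use the two-of-three property to keep syzygies inside $\catac(R)$ or $\catbc(R)$, and invoke the syzygy characterizations of $\gdim$ and $\gcdim$ to truncate.

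One small imprecision worth tightening is the phrase ``the remaining reflexivity and vanishing conditions follow the same way.'' The first vanishing, $\ext_R^{\geq 1}(\Otimes CG,C)=0$, does come from Hom--tensor adjunction: $\Rhom[R]{\Lotimes CG}{C}\simeq\Rhom[R]{G}{\Rhom CC}\simeq\Rhom[R]{G}{R}$, and $\Lotimes CG\simeq\Otimes CG$ since $G\in\catac(R)$. But the second vanishing, $\ext_R^{\geq 1}(\Hom{\Otimes CG}{C},C)=\ext_R^{\geq 1}(G^*,C)=0$ with $G^*=\Hom GR$, is not a second application of the same adjunction. The clean route is tensor evaluation: for a finitely generated module such as $G^*$ one has $\Rhom[R]{G^*}{C}\simeq\Rhom[R]{G^*}{R}\lotimes_R C$ (compute both sides from a finite-rank free resolution of $G^*$), and since $G$ is totally reflexive $\Rhom[R]{G^*}{R}\simeq G^{**}\cong G$, whence $\Rhom[R]{G^*}{C}\simeq\Lotimes GC\simeq\Otimes GC$ using $G\in\catac(R)$ again; this is concentrated in degree $0$, giving the vanishing and also the identification $\Hom{G^*}{C}\cong\Otimes CG$ needed for biduality. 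You correctly flag this as the delicate step and correctly isolate where $\catac(R)$-membership is used, but the mechanism is tensor evaluation rather than another round of Hom--tensor adjunction. The rest of the argument---the two-of-three bookkeeping in each implication and the syzygy truncation in (ii)$\Rightarrow$(i) and (iii)$\Rightarrow$(i)---is correct as written, and the facts you propose to cite (the resolving property of $\catgc(R)$ and the finite-length totally reflexive resolution) are exactly the right inputs.
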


Now we are in position to complete the proof of Theorem~\ref{thm110527b}.

\begin{proofn}[Proof of Theorem~\ref{thm110527b}\eqref{thm110527b1}]
\label{proof110530b}
Assume that $M$ is a non-zero finitely generated $R$-module such that
$\gcdim_R(M) = r < \infty$ and $M\in\catbc(R)$.
Then Lemma~\ref{lem0701x} implies that
$\gdim_R(\Hom CM)=r<\infty$.
Since $M\neq 0$,  we have $\Hom CM\neq 0$, by Fact~\ref{fact110527a}(b), so
Theorem~\ref{thm110527a}\eqref{thm110527a1}
provides an $R$-regular sequence $\underline z = z_1, \ldots, z_r$,
integers $n_0, n_1, \ldots , n_r  \geq 0$ such that $n\geq 1$, and an exact sequence
\begin{equation}\label{eq110530c}
0 \to \Hom CM \to Z' \to N' \to 0
\end{equation}
with $Z' = \oplus_{i=0}^r(R/(\underline z_{[i]}))^{n_i}$ and $\gdim_R(N') \leq r$.

The condition $M\in\catbc(R)$ implies that $\Hom CM\in\catac(R)$ by Fact~\ref{disc01xy}(c),
and Fact~\ref{disc01xy}(a) implies that $Z'\in\catac(R)$.
Also from Fact~\ref{disc01xy}(a)--(b), we conclude that $N'\in\catac(R)$, which implies that $\Tor 1C{N'}=0$.
Thus, an application of $\Otimes C-$ to the sequence~\eqref{eq110530c}
yields the next exact sequence:
\begin{equation}\label{eq110530d}
0 \to \Otimes{C}{\Hom CM} \to \Otimes{C}{Z'} \to \Otimes{C}{N'} \to 0.
\end{equation}
The assumption $M\in\catbc(R)$ gives an isomorphism $\Otimes{C}{\Hom CM}\cong M$. 
The equality $Z' = \oplus_{i=0}^r(R/(\underline z_{[i]}))^{n_i}$ implies that
$Z:=\Otimes{C}{Z'} = \oplus_{i=0}^r(C/(\underline z_{[i]})C)^{n_i}$.

The condition $N'\in\catac(R)$ implies that $N:=\Otimes C{N'}\in\catbc(R)$ by Fact~\ref{disc01xy}(c),
and $N'\cong\Hom CN$ by the definition of what it means for $N'$ to be in $\catac(R)$.
Thus, because of Lemma~\ref{lem0701x}, we have 
$$\gcdim_R(N)=\gdim(\Hom CN)=\gdim_R(N') \leq r.$$
Thus,  the sequence~\eqref{eq110530d}
satisfies the conclusion of Theorem~\ref{thm110527b}\eqref{thm110527b1}.
\qed
\end{proofn}

\section{Proofs of Theorems~\ref{thm110608a} and~\ref{thm110530b}} \label{sec110608a}

We continue with a definition due to Enochs and Jenda~\cite{enochs:gipm}.
\renewcommand{\gid}{\operatorname{Gid}}

\begin{defn} \label{notation110609a}
A \emph{complete injective resolution}
is an exact complex $Y$ of injective $R$-modules 
such that
$\Hom JY$ is exact for each injective $R$-module $J$.
An $R$-module $N$ is \emph{G-injective} if there
exists a complete injective resolution $Y$ such that $N\cong\Ker(\partial^Y_0)$,
and $Y$ is a \emph{complete injective resolution of $N$}.
Let
$\catgi(R)$ denote the class of G-injective $R$-modules, and let
$\gid_R(-)$
denote the homological dimension obtained from coresolutions  in $\catgi(R)$.
We say that an $R$-module $M$ has
\emph{locally finite G-injective
dimension} provided that $\gid_{R_{\m}}(M_{\m})<\infty$ for each maximal ideal $\m\subset R$.
\end{defn}

\begin{disc}\label{disc110726a}
Using existing technology, the modules of finite G-injective dimension behave best when $R$ is Cohen-Macaulay 
with a dualizing module.\footnote{Most good behavior is also known when $R$
has a dualizing
complex, but we restrict our attention to the Cohen-Macaulay case. For instance, the conclusion of Lemma~\ref{lem110610a} holds when
$R$ is only assumed to have a pointwise dualizing complex; the proof is the same, using results from~\cite{christensen:ogpifd}
in place of the results from~\cite{enochs:fdgipm}.}
For instance, in general it is not known whether the G-injective dimension localizes;
but it is known to localize when $R$ has a dualizing module. This is due to the following connection with
Bass classes, the local case of which is from~\cite{enochs:fdgipm}.\footnote{It is worth noting that the results in~\cite{enochs:fdgipm} assume that $R$ is local,
hence with finite Krull dimension; also, the results of~\cite{christensen:ogpifd} assume implicitly that $R$ has finite Krull dimension
since the definition of a dualizing complex used there includes an assumption of finite injective dimension.
Contrast this with our definition of pointwise dualizing module, and with Grothendieck's definition of a pointwise dualizing complex from~\cite{hartshorne:rad}.}
\end{disc}

\begin{lem}\label{lem110610a}
Assume that $R$ is Cohen-Macaulay with a pointwise dualizing module.
Let $M$ be an $R$-module.
Then an $R$-module $M$ has locally finite G-injective dimension
if and only if 
$M\in\catb_{\omega}(R)$.
\end{lem}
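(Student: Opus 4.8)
The plan is to reduce everything to the local case, where the result is essentially the known connection between finite G-injective dimension and the Bass class from~\cite{enochs:fdgipm}. First I would invoke the local-global principle for the Bass class from Fact~\ref{disc01xy}(a): namely, $M\in\catb_{\omega}(R)$ if and only if $M_{\m}\in\catb_{\omega_{\m}}(R_{\m})$ for each maximal ideal $\m\in\supp_R(M)$ (the modules outside the support are zero, hence trivially in the relevant Bass class). By Fact~\ref{fact110527a}(a), each $\omega_{\m}$ is a dualizing module for $R_{\m}$ since $R_{\m}$ is Cohen-Macaulay of finite Krull dimension and $\omega$ is pointwise dualizing. On the other side, $M$ has locally finite G-injective dimension means precisely that $\gid_{R_{\m}}(M_{\m})<\infty$ for every maximal ideal $\m$. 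So both conditions are local conditions on the maximal localizations, and it suffices to prove: if $(R,\m)$ is local Cohen-Macaulay with dualizing module $\omega$, then an $R$-module $M$ satisfies $\gid_R(M)<\infty$ if and only if $M\in\catb_{\omega}(R)$.

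This local statement is exactly~\cite[Theorem 1.7 or the relevant main result]{enochs:fdgipm} (the precise citation to be inserted): over a local Cohen-Macaulay ring with dualizing module $\omega$, the modules of finite G-injective dimension coincide with the Bass class $\catb_{\omega}(R)$. I would simply cite this. One small technical point to address: the quoted result from~\cite{enochs:fdgipm} is stated for local rings, and we need to be careful that the Bass class appearing there is $\catb_{\omega_{\m}}(R_{\m})$ formed over the \emph{localized} dualizing module, which matches what the local-global principle hands us; this is a matter of unwinding definitions.

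The main obstacle — and it is more a matter of bookkeeping than of mathematics — is making sure the localization is handled cleanly on \emph{both} sides simultaneously. For the Bass class, the local-global principle in Fact~\ref{disc01xy}(a) does the job. For G-injective dimension, there is no general localization statement (as Remark~\ref{disc110726a} stresses), which is exactly why the hypothesis is phrased as ``locally finite G-injective dimension'' rather than ``finite G-injective dimension'': the definition is built to be local. So the logical structure is: (locally finite $\catgi$-$\id$) $\Leftrightarrow$ ($\gid_{R_{\m}}(M_{\m})<\infty$ for all $\m$) $\Leftrightarrow$ ($M_{\m}\in\catb_{\omega_{\m}}(R_{\m})$ for all $\m\in\supp_R(M)$, using the local theorem and noting $M_{\m}=0$ off the support) $\Leftrightarrow$ ($M\in\catb_{\omega}(R)$, by the local-global principle). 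Writing these equivalences out in order, citing Fact~\ref{fact110527a}(a), Fact~\ref{disc01xy}(a), and~\cite{enochs:fdgipm} at the appropriate steps, completes the proof.
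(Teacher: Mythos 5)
Your proposal is correct and follows essentially the same route as the paper: reduce to the local case via the local-global principle for Bass classes and the by-definition locality of locally finite G-injective dimension, note that $\omega_\m$ is dualizing for the local Cohen-Macaulay ring $R_\m$ by Fact~\ref{fact110527a}(a), and then cite the local result from \cite{enochs:fdgipm} (the paper uses Proposition~1.4 and Theorems~1.6 and~2.5 there). One minor note: the paper's printed proof attributes the local-global principle for Bass classes to Fact~\ref{disc01xy}(b), but that principle actually appears in Fact~\ref{disc01xy}(a) as you correctly cite, so this looks like a small slip in the paper rather than in your argument.
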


\begin{proof}
If $R$ is local, then the result follows from~\cite[Proposition~1.4 and Theorems~1.6 and~2.5]{enochs:fdgipm}.
By definition, the condition ``$M$ has locally finite G-injective dimension'' is a local condition.
Fact~\ref{disc01xy}(b) shows that the condition ``$M\in\catb_{\omega}(R)$'' is also a local condition.
Note that Fact~\ref{fact110527a}(a) implies that for each maximal ideal $\m\subset R$, the localization
$\omega_{\m}$ is a dualizing module for the Cohen-Macaulay local ring $R_{\m}$.
Thus, the general result follows from the local case.
\end{proof}

The next result shows that the G-dimension of a finitely generated module behaves like its projective dimension.

\begin{lem}\label{lem110610b}
Let $M$ be a finitely generated $R$-module.
The following conditions are equivalent:
\begin{enumerate}[\quad\rm(i)]
\item \label{lem110610b1}
$\gdim_{R}(M)<\infty$;
\item \label{lem110610b2}
$\gdim_{R_{\m}}(M_{\m})<\infty$ for each maximal ideal $\m\subset R$.
\end{enumerate}
When $R$ is Cohen-Macaulay with a pointwise dualizing module $\omega$,
these conditions are equivalent to the following:
\begin{enumerate} 
\item[\rm(iii)] \label{lem110610b3}
$M\in\cata_{\omega}(R)$.
\end{enumerate}
\end{lem}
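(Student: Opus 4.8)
The plan is to prove the equivalence (i)$\iff$(ii) first, for a general commutative noetherian ring, and then add the third equivalence under the Cohen-Macaulay hypothesis by translating through the Auslander class. For (i)$\Rightarrow$(ii), I would use the fact that G-dimension does not grow under localization for finitely generated modules: if $G_\bullet \to M$ is a finite resolution by totally reflexive modules, then localizing at $\m$ gives a finite resolution $(G_\bullet)_\m \to M_\m$, and each $(G_i)_\m$ is totally reflexive over $R_\m$ because the defining conditions (reflexivity with respect to $R$, vanishing of $\ext_R^{\geq 1}(G_i,R)$ and $\ext_R^{\geq 1}(\Hom{G_i}{R},R)$) all commute with localization for finitely generated modules over a noetherian ring. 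So $\gdim_{R_\m}(M_\m) \leq \gdim_R(M) < \infty$. This is standard (Auslander--Bridger), but I would cite it cleanly rather than reprove it.

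For the converse (ii)$\Rightarrow$(i), the key input is the characterization of finite G-dimension via vanishing of a single Ext-type obstruction together with the Auslander--Bridger formula, or more efficiently the theorem (due to Avramov--Martsinkovsky, or Christensen's book) that for a finitely generated module $M$ over a noetherian ring, $\gdim_R(M)<\infty$ if and only if $\gdim_{R_\m}(M_\m)<\infty$ for all maximal ideals $\m$; the nontrivial direction uses that $M$ has a ``complete resolution'' locally, and these glue because $\gdim$ is bounded by $\dim R$ plus depth data in a way that is controlled by finitely many primes, or via the fact that $\sup_\m \gdim_{R_\m}(M_\m)$ is finite since it is bounded by $\gdim_{R_\m}(M_\m) \le \depth R_\m \le \dim R$ when $R$ has finite Krull dimension — and in general one reduces to the catenary/Noetherian spectrum argument. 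I expect this direction to be the main obstacle, precisely because (as the paper's own Remark~\ref{disc110726a} emphasizes) localization questions for Gorenstein-type dimensions are delicate; the cleanest route is to invoke the established result that finite G-dimension is a local property for finitely generated modules and simply cite it.

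For the equivalence with (iii) under the standing hypothesis that $R$ is Cohen-Macaulay with a pointwise dualizing module $\omega$, I would argue as follows. By Fact~\ref{disc01xy}(b), membership in $\cata_\omega(R)$ is a local condition: $M\in\cata_\omega(R)$ iff $M_\m\in\cata_{\omega_\m}(R_\m)$ for every maximal $\m$, and by Fact~\ref{fact110527a}(a) each $\omega_\m$ is a dualizing module for the Cohen-Macaulay local ring $R_\m$. So it suffices to prove, for $R$ local Cohen-Macaulay with dualizing module $\omega$, that a finitely generated module $M$ has $\gdim_R(M)<\infty$ iff $M\in\cata_\omega(R)$. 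This is a known characterization — Foxby equivalence together with the fact that over such a ring the modules of finite G-dimension are exactly the Auslander-class modules (see e.g. Christensen, or the references the paper already uses). Combining the local statement with the two local-global principles gives (ii)$\iff$(iii), and with the already-established (i)$\iff$(ii) this completes the proof. I would write this paragraph by first reducing to the local case, then citing the local Auslander-class characterization, mirroring exactly the structure of the proof of Lemma~\ref{lem110610a}.
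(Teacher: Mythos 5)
Your proposal matches the paper's proof in all essentials: (i)$\Rightarrow$(ii) via the elementary inequality $\gdim_{R_\m}(M_\m)\leq\gdim_R(M)$, (ii)$\Rightarrow$(i) by citation of a known local-to-global theorem, and (ii)$\iff$(iii) by reducing to the local case via the local-global principle for the Auslander class and then invoking the local characterization of finite G-dimension over a Cohen--Macaulay local ring with dualizing module, exactly mirroring the proof of Lemma~\ref{lem110610a}. One small caveat: the paper cites \cite[Theorem 3.3]{avramov:rrc1} (Avramov--Iyengar--Lipman, 2010) for (ii)$\Rightarrow$(i), and your proposed references (Avramov--Martsinkovsky, Christensen's book) do not actually contain that global statement in the generality needed here; also, your interpolated sketch via $\gdim_{R_\m}(M_\m)\leq\depth R_\m\leq\dim R$ only works when $R$ has finite Krull dimension, which is not assumed in the lemma --- but since you correctly fall back on citing the established theorem rather than relying on that sketch, the approach is sound.
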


\begin{proof}
The implication $\eqref{lem110610b1}\implies\eqref{lem110610b2}$ follows from the inequality $\gdim_{R_{\m}}(M_{\m})\leq\gdim_R(M)$,
which is straightforward to verify.
The implication
$\eqref{lem110610b2}\implies\eqref{lem110610b1}$
is from~\cite[Theorem 3.3]{avramov:rrc1}.
When $R$ is Cohen-Macaulay with a pointwise dualizing module $\omega$,
the equivalence $\eqref{lem110610b2}\iff$(iii)
follows from the local results~\cite[Proposition~1.3 and Theorems~1.6 and~2.1]{enochs:fdgipm}
as in the proof of Lemma~\ref{lem110610a}.\footnote{Note that this uses the characterization of totally reflexive modules
in terms of ``complete resolutions'' found in~\cite[(4.4.4)]{avramov:svcci};
see also~\cite[Theorem 3.1]{avramov:aratc}. 
Specifically, an $R$-module $M$ is totally reflexive if and only if there is an exact complex $F=\cdots\xra{\partial^F_1}F_0\xra{\partial^F_0}F_{-1}\xra{\partial^F_{-1}}\cdots$
such that $\Hom FR$ is exact and $M\cong\im(\partial^F_0)$.
Note that the local result was also announced  in~\cite[Corollary 3.3]{foxby:gdcmr}.}
\end{proof}

\begin{proofn}[Proof of Theorem~\ref{thm110608a}]
\label{proof110609a}
Assume that $R$ is Cohen-Macaulay  with a pointwise dualizing
module $\omega$ and that $M$ is a non-zero finitely generated $R$-module with locally finite G-injective
dimension. 
Lemma~\ref{lem110610a} implies that $M \in \mathcal B_{\omega}(R)$.
Using Foxby equivalence (from Fact~\ref{disc01xy}(c)) we conclude that $\Hom\omega M\in\cata_{\omega}(R)$,
so Lemma~\ref{lem110610b} implies that $r:=\gdim_R(\Hom\omega M)<\infty$.
The fact that $R$ is noetherian implies that $\Hom\omega M$ is finitely generated.
(The proof  concludes as in Proof~\ref{proof110530b}. We include the details for the sake of thoroughness.)

Since $M\neq 0$,  we have $\Hom \omega M\neq 0$, by Fact~\ref{fact110527a}(b), so
Theorem~\ref{thm110527a}\eqref{thm110527a1}
implies that there are an $R$-regular sequence $\underline z = z_1, \ldots, z_r$,
integers $n_0, n_1, \ldots , n_r  \geq 0$ with $n_r\geq 1$, and an exact sequence
\begin{equation}\label{proof110609a1}
0 \to \Hom \omega M \to Z' \to N' \to 0
\end{equation}
with $Z' = \oplus_{i=0}^r(R/(\underline z_{[i]}))^{n_i}$ and $\gdim_R(N') \leq r$.
Fact~\ref{disc01xy}(a) implies that $Z'\in\cata_\omega(R)$.
From Fact~\ref{disc01xy}(b), we conclude that $N'\in\cata_\omega(R)$, which implies that $\Tor 1\omega{N'}=0$.
Thus, an application of $\Otimes \omega-$ to the sequence~\eqref{proof110609a1}
yields the next exact sequence:
\begin{equation}\label{proof110609a2}
0 \to \Otimes{\omega}{\Hom \omega M} \to \Otimes{\omega}{Z'} \to \Otimes{\omega}{N'} \to 0.
\end{equation}
The assumption $M\in\catb_\omega(R)$ gives an isomorphism $\Otimes{\omega}{\Hom \omega M}\cong M$. 
The equality $Z' = \oplus_{i=0}^r(R/(\underline z_{[i]}))^{n_i}$ implies that
$Z:=\Otimes{\omega}{Z'} = \oplus_{i=0}^r(\omega/(\underline z_{[i]})\omega)^{n_i}$.

The condition $N'\in\cata_\omega(R)$ implies that $N:=\Otimes \omega{N'}\in\catb_\omega(R)$ by Fact~\ref{disc01xy}(c),
and $N'\cong\Hom \omega N$ by the definition of what it means for $N'$ to be in $\cata_\omega(R)$.
Thus,  we have 
$\gdim(\Hom \omega N)=\gdim_R(N') \leq r$.
So,  the sequence~\eqref{proof110609a2}
satisfies the conclusion of Theorem~\ref{thm110608a}.
\qed
\end{proofn}

For our next results, we need a better understanding of the relationship between semidualizing modules and 
a pointwise dualizing module.

\begin{lem}\label{lem110611a}
Assume that $R$ is Cohen-Macaulay with a pointwise dualizing module $\omega$.
If $C$ is a semidualizing $R$-module, then so is $\Hom C\omega$.
\end{lem}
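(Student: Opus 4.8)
The plan is to reduce to the local case and then show that $\cd:=\Hom C\omega$ is semidualizing by a short computation in the derived category $\D(R)$. First I would note that $\cd$ is finitely generated, since $R$ is noetherian and $C$ is finitely generated, and that $(\Hom C\omega)_{\p}\cong\Hom[R_{\p}]{C_{\p}}{\omega_{\p}}$ for every prime $\p$, because localization commutes with $\Hom$ of finitely generated modules. By Fact~\ref{fact110527a}(a) the semidualizing property is local, and for each maximal ideal $\m$ the localization $\omega_{\m}$ is a dualizing module for the Cohen--Macaulay local ring $R_{\m}$ while $C_{\m}$ is semidualizing; thus it suffices to treat the case in which $R$ is Cohen--Macaulay local, $\omega$ is dualizing, and $C$ is semidualizing, which I assume henceforth.

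Next I would record the Ext-vanishing that makes $\cd$ behave well. A semidualizing module has the same depth as the ring, so $\depth_R(C)=\depth R=\dim R$; since $\supp_R(C)=\spec(R)$ gives $\dim_R(C)=\dim R$, the module $C$ is maximal Cohen--Macaulay. Local duality (using $\id_R(\omega)=\dim R$; see, e.g., \cite{hartshorne:rad}) identifies $\Ext{i}{C}{\omega}$ with the Matlis dual of $\HH^{\dim R-i}_{\m}(C)$, which vanishes for $i\geq 1$ because $C$ is maximal Cohen--Macaulay; hence $\Ext{\geq 1}{C}{\omega}=0$, so the module $\cd=\Hom C\omega$ represents $\Rhom C\omega$ in $\D(R)$. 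Biduality with respect to the dualizing module $\omega$ supplies a natural isomorphism $\Rhom{\Rhom C\omega}{\omega}\res C$, and since $C$ is concentrated in degree $0$ this yields $\Ext{\geq 1}{\cd}{\omega}=0$ and $\Hom{\cd}{\omega}\cong C$; in particular $\Rhom{\cd}{\omega}$ also represents $C$ in $\D(R)$.

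Then I would run the main computation. Combining $\Rhom C\omega\simeq\cd$ with Hom--tensor adjunction and the commutativity of $\lotimes$ gives natural isomorphisms in $\D(R)$
\begin{align*}
\Rhom{\cd}{\cd}&\simeq\Rhom{\cd}{\Rhom C\omega}\simeq\Rhom{\Lotimes{\cd}{C}}{\omega}\\
&\simeq\Rhom{C}{\Rhom{\cd}{\omega}}\simeq\Rhom{C}{C}\simeq R,
\end{align*}
in which the final isomorphism is the homothety morphism, since $C$ is semidualizing. Passing to homology then gives $\Ext{\geq 1}{\cd}{\cd}=0$ and an isomorphism $\Hom{\cd}{\cd}\cong R$ induced by the homothety map; therefore $\cd$ is a semidualizing $R$-module, which finishes the local case, and hence the lemma.

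The step I expect to be the main obstacle is the naturality bookkeeping in the displayed chain: one must verify that each isomorphism used --- the adjunction $\Rhom{A\lotimes B}{\omega}\simeq\Rhom{B}{\Rhom{A}{\omega}}$, the commutativity $A\lotimes B\simeq B\lotimes A$, and the biduality $\Rhom{\Rhom C\omega}{\omega}\simeq C$ --- is natural in the relevant variable, and that the induced map on $\HH^0$ is precisely the homothety $R\to\Hom{\cd}{\cd}$ rather than a unit multiple of it. A route avoiding the derived category would be to kill a maximal $R$-regular sequence, which is automatically regular on $R$, $C$, $\omega$, and $\cd$, reducing to the Artinian case in which $\omega$ is the injective hull of the residue field, and then to argue via Matlis duality; but I would prefer the computation above.
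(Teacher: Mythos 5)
Your reduction to the local case is exactly the paper's: the semidualizing and (pointwise) dualizing properties are local by Fact~\ref{fact110527a}(a), so it suffices to treat the local case. At that point the paper simply cites \cite[Facts 1.18--1.19]{sather:bnsc}, whereas you supply a complete derived-category proof; this is the genuine difference. Your local argument is correct: $C$ is maximal Cohen--Macaulay (depth of a semidualizing module equals the depth of the ring, which equals $\dim R$ since $R$ is CM and $\supp C=\spec R$), so local duality gives $\ext^{\geq 1}_R(C,\omega)=0$ and hence $\Rhom C\omega\simeq\cd$; biduality over the dualizing module then gives $\Rhom{\cd}\omega\simeq C$; and the adjunction/swap chain identifies $\Rhom{\cd}{\cd}$ with $\Rhom CC\simeq R$. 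What your approach buys is self-containedness at the cost of importing the derived-category machinery, which the paper deliberately avoids in favor of a citation.

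One remark on the step you flag as the main obstacle. You do not actually need the full naturality bookkeeping to conclude that the homothety $\chi\colon R\to\Hom{\cd}{\cd}$ is an isomorphism. Once you know $\Hom{\cd}{\cd}\cong R$ as $R$-modules and that $\cd$ is faithful (which follows since $\Hom{\cd}{\omega}\cong C$ has full support, forcing $\ann_R(\cd)=0$), a short elementary argument finishes it: pick a generator $\phi$ of the cyclic module $\Hom{\cd}{\cd}$, write $\id_{\cd}=r\phi$ and $\phi\circ\phi=s\phi$; then $\phi=\id_{\cd}\circ\phi=rs\phi$, so $(1-rs)$ annihilates the faithful module $\Hom{\cd}{\cd}\cong R$, giving $rs=1$, hence $r$ is a unit and $\id_{\cd}$ generates. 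Thus $\chi$ is injective (faithfulness) and surjective, so it is an isomorphism without tracing the composite back to the homothety. This sidesteps the only non-routine verification in your proof.
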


\begin{proof}
In the local case, this result is standard; see, e.g., \cite[Facts 1.18--1.19]{sather:bnsc}.
Since the semidualizing and (pointwise) dualizing conditions are local by Fact~\ref{fact110527a}(a),
the general case of the result follows.
\end{proof}

The next two lemmas elaborate on the local-global behavior for our invariants.

\begin{lem}\label{lem110611d}
Let $M$ be a finitely generated $R$-module, and
let $C$ be a semidualizing $R$-module.
Then $\pcpd_{R}(M)<\infty$ if and only if
$\text{P}_{C_{\m}}\text{-}\pd_{R_{\m}}(M_{\m})<\infty$ for each maximal ideal $\m\subset R$.
\end{lem}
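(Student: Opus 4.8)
The plan is to reduce the statement to Fact~\ref{disc01xyz} (the equality $\pcpd_R(M)=\pd_R(\Hom CM)$) together with the standard local-global principle for classical projective dimension of finitely generated modules, and the local-global principle for the Bass class from Fact~\ref{disc01xy}(a). First I would dispose of the forward implication: if $\pcpd_R(M)=r<\infty$, then for each maximal ideal $\m\subset R$ a $\catpc(R)$-resolution of $M$ of length $r$ localizes to a $\catp_{C_\m}(R_\m)$-resolution of $M_\m$ of length at most $r$, using that $(P\otimes_RC)_\m\cong P_\m\otimes_{R_\m}C_\m$ with $P_\m$ projective over $R_\m$; hence $\text{P}_{C_\m}\text{-}\pd_{R_\m}(M_\m)\leq r<\infty$. (Alternatively one can route this through Fact~\ref{disc01xyz} and the fact that $\Hom CM$ localizes, since $M$ and $C$ are finitely generated.)

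For the converse, assume $\text{P}_{C_\m}\text{-}\pd_{R_\m}(M_\m)<\infty$ for every maximal ideal $\m$. By Fact~\ref{disc01xyz} applied over $R_\m$ (using that $C_\m$ is semidualizing by Fact~\ref{fact110527a}(a), and that $(\Hom CM)_\m\cong\Hom[R_\m]{C_\m}{M_\m}$ because $C$ and $M$ are finitely generated), this says $\pd_{R_\m}((\Hom CM)_\m)<\infty$ for each $\m$. Now $\Hom CM$ is a finitely generated $R$-module (as $R$ is noetherian and $C,M$ are finitely generated), so the classical local-global principle for projective dimension of finitely generated modules over a noetherian ring gives $\pd_R(\Hom CM)<\infty$. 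One small point to address here: the classical principle is usually stated as $\pd_R(\Hom CM)=\sup_\m\pd_{R_\m}((\Hom CM)_\m)$, which could in principle be infinite even when each term is finite; to rule this out I would additionally invoke Fact~\ref{disc01xy}(a), namely that the Bass class $\catbc(R)$ satisfies the local-global principle, so that the hypothesis forces $M\in\catbc(R)$, whence $\Hom CM\in\catac(R)$ and in particular $\Hom CM$ has finite projective dimension precisely when it does locally — or, more directly, I would cite that for a finitely generated module over a noetherian ring, finite projective dimension at every maximal ideal implies finite projective dimension globally (this is standard; e.g., the supremum is attained at some $\m$ by considering a minimal free resolution and noetherianness). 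Finally, applying Fact~\ref{disc01xyz} once more over $R$ converts $\pd_R(\Hom CM)<\infty$ back to $\pcpd_R(M)<\infty$, completing the proof.

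The only genuine obstacle is the local-global step for classical projective dimension of $\Hom CM$: one must make sure that "finite at each maximal ideal" really does give "finite globally" for a finitely generated module over a noetherian (not necessarily finite-dimensional) ring. I expect to handle this by the standard argument that $\pd_R$ of a finitely generated module equals $\sup\{i : \ext_R^i(M,R/\m)\neq 0\text{ for some maximal }\m\}$, and this supremum is finite as soon as each localization has finite projective dimension, since $\ext_R^i(\Hom CM, R/\m)\cong\ext_{R_\m}^i((\Hom CM)_\m, R_\m/\m R_\m)$. Everything else is routine bookkeeping with localization of Hom, tensor, and semidualizing modules already recorded in Facts~\ref{fact110527a}, \ref{disc01xyz}, and~\ref{disc01xy}.
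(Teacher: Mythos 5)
Your proposal follows essentially the same route as the paper: localize a $\catpc$-resolution for the forward direction, and for the converse pass through Fact~\ref{disc01xyz} over each $R_{\m}$ to get $\pd_{R_{\m}}\bigl((\Hom CM)_{\m}\bigr)<\infty$, invoke a local-global principle for classical projective dimension of the finitely generated module $\Hom CM$, and then apply Fact~\ref{disc01xyz} once more over $R$. You are right to flag the local-global step for $\pd$ as the only real subtlety --- the paper simply asserts it --- but both of the fixes you sketch for it are off.

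First, routing through the Bass class does not close the gap: $M\in\catbc(R)$ gives $\Hom CM\in\catac(R)$ by Foxby equivalence, but the Auslander class contains plenty of modules of infinite projective dimension (over a Cohen--Macaulay local ring with dualizing module it is exactly the class of modules of finite G-dimension), so membership in $\catac(R)$ tells you nothing about $\pd_R(\Hom CM)$ being finite. Second, your phrase ``the supremum is attained at some $\m$ by considering a minimal free resolution and noetherianness'' does not survive scrutiny: over a non-local ring there is no minimal free resolution, and noetherianness of $R$ alone does not make $\sup_{\m}\pd_{R_{\m}}\bigl((\Hom CM)_{\m}\bigr)$ finite. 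The correct argument is the standard openness/quasi-compactness one: fix a resolution $F_{\bullet}\to\Hom CM$ by finitely generated free $R$-modules, set $U_n=\{\p\in\spec R:(\Omega^n)_{\p}\text{ is }R_{\p}\text{-free}\}$ where $\Omega^n=\coker(F_{n+1}\to F_n)$; each $U_n$ is open (the locally free locus of a finitely generated module over a noetherian ring is open), $U_n\subseteq U_{n+1}$, and the hypothesis says $\bigcup_n U_n=\spec R$. Quasi-compactness of $\spec R$ then forces $U_n=\spec R$ for some $n$, i.e.\ $\pd_R(\Hom CM)\le n$. With that step repaired, your argument matches the paper's proof exactly.
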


\begin{proof}
The forward implication  follows from the inequality $\text{P}_{C_{\m}}\text{-}\dim_{R_{\m}}(M_{\m})\leq\pcpd_R(M)$,
which is straightforward to verify.
For the converse, assume that $\text{P}_{C_{\m}}\text{-}\pd_{R_{\m}}(M_{\m})<\infty$ for each maximal ideal $\m\subset R$.
It follows that the module
$\Hom CM_{\m}\cong\Hom[R_{\m}]{C_{\m}}{M_{\m}}$
has finite projective dimension over $R_{\m}$ for all $\m$. Hence, the finitely generated $R$-module $\Hom CM$ has finite projective dimension,
so Fact~\ref{disc01xyz} implies that $\pcpd_{R}(M)$ is finite.
\end{proof}

The next lemma  is a souped-up version of a special case of a result of Takahashi and White that is documented in~\cite{sather:tate1}.

\begin{lem} \label{lem110612a}
Assume that $R$ is Cohen-Macaulay with a pointwise dualizing module $\omega$.
For each finitely generated $R$-module $M$, one has
$\catp_{\omega}\text{-}\pd_R(M)<\infty$
if and only if $M$ has locally finite injective dimension.
\end{lem}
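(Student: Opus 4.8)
The plan is to reduce the statement to the case where $R$ is Cohen-Macaulay \emph{local} with a dualizing module, and there to invoke the classical correspondence between finite projective dimension and finite injective dimension.

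First I would reduce to the local case. By Lemma~\ref{lem110611d} applied with $C=\omega$, the condition $\catp_{\omega}\text{-}\pd_R(M)<\infty$ holds if and only if $\catp_{\omega_{\m}}\text{-}\pd_{R_{\m}}(M_{\m})<\infty$ for every maximal ideal $\m\subset R$. On the other side, $M$ has locally finite injective dimension precisely when $\id_{R_{\m}}(M_{\m})<\infty$ for every maximal ideal $\m\subset R$. Finally, Fact~\ref{fact110527a}(a) shows that $\omega_{\m}$ is a dualizing module for the Cohen-Macaulay local ring $R_{\m}$ (compare the proof of Lemma~\ref{lem110610a}). So it suffices to treat the case in which $R$ is Cohen-Macaulay local with dualizing module $\omega$.

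In that local case, Fact~\ref{disc01xyz} (with $C=\omega$) gives $\catp_{\omega}\text{-}\pd_R(M)=\pd_R(\Hom\omega M)$, so the claim becomes: $\pd_R(\Hom\omega M)<\infty$ if and only if $\id_R(M)<\infty$. For the forward implication, $\catp_{\omega}\text{-}\pd_R(M)<\infty$ forces $M\in\catb_{\omega}(R)$ by Fact~\ref{disc01xyz}, so the evaluation isomorphism gives $M\cong\Otimes{\omega}{\Hom\omega M}$ with $\Tor{\geq 1}{\omega}{\Hom\omega M}=0$, whence $M\simeq\Lotimes{\omega}{\Hom\omega M}$; now $\id_R(\omega)<\infty$ since $\omega$ is dualizing, and the standard inequality $\id_R(\Lotimes{\omega}{\Hom\omega M})\leq\id_R(\omega)+\pd_R(\Hom\omega M)$ yields $\id_R(M)<\infty$. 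The reverse implication is the Foxby--Sharp correspondence: over a Cohen-Macaulay local ring with dualizing module $\omega$, a finitely generated module $M$ of finite injective dimension satisfies $\Ext{\geq 1}\omega M=0$, $M\cong\Otimes{\omega}{\Hom\omega M}$, and $\pd_R(\Hom\omega M)<\infty$. In our framework this is the local instance of the Takahashi--White result documented in~\cite{sather:tate1} (see also~\cite{takahashi:hasm}), of which this lemma is the souped-up version alluded to above.

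The main obstacle is precisely this last, reverse implication in the local case --- that finite injective dimension of $M$ forces $\Hom\omega M$ to have finite projective dimension. Everything else (the local--global step, Foxby equivalence, and the easy direction) is formal, flowing from Lemma~\ref{lem110611d}, Facts~\ref{disc01xyz} and~\ref{disc01xy}, and Fact~\ref{fact110527a}. Since the local statement is classical, I would settle it by citation rather than reproduce its proof, leaving the genuine work of this lemma in the bookkeeping of the first step.
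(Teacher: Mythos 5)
Your proposal is correct and follows essentially the same route as the paper: reduce to the local case via Lemma~\ref{lem110611d} (with $C=\omega$, using Fact~\ref{fact110527a}(a) to know $\omega_{\m}$ is dualizing over the Cohen--Macaulay local ring $R_{\m}$), and then settle the local statement by appealing to the Takahashi--White result recorded in~\cite{sather:tate1} --- exactly the citation the paper makes (\cite[Lemma 2.7]{sather:tate1}). The additional unpacking you give for the forward direction, via $\Tor^R_{\geq1}(\omega,\Hom\omega M)=0$ and the inequality $\id_R(\Lotimes{\omega}{\Hom\omega M})\leq\id_R(\omega)+\pd_R(\Hom\omega M)$, is sound but not needed once the citation is invoked.
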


\begin{proof}
When $\omega$ is a dualizing module for $R$, i.e., when $R$ has finite Krull dimension, this result is from~\cite[Lemma 2.7]{sather:tate1}.
In particular, this takes care of the local case.
The general case follows from the local case, by Lemma~\ref{lem110611d}.
\end{proof}

Here are some more notions from~\cite{holm:smarghd}.

\begin{defn} \label{notation08b}
Let $C$ be a semidualizing $R$-module, and let
$\catic(R)$ denote the class of modules $N\cong \Hom CI$ with $I$ injective.
Modules in  $\catic(R)$ are called \emph{$C$-injective}.
We let
$\icid_R(-)$
denote the homological dimension obtained from coresolutions  in $\catic(R)$.
We say that an $R$-module $M$ has
\emph{locally finite $\icid$} provided that 
$\text{I}_{C_{\m}}\text{-}\id_{R_{\m}}(M_{\m})<\infty$ for each maximal ideal $\m\subset R$.
\end{defn}

\begin{fact} \label{fact110612a}
Let $C$ be  a
semidualizing $R$-module.  
The Auslander class $\catac(R)$ contains every  $R$-module
of finite $\cati_C$-injective dimension;
see~\cite[Lemmas 4.1 and 5.1]{holm:fear}.
Given an $R$-module $M$, one has
$\icid_R(M)=\id_R(\Otimes CM)$
and 
$\id_R(M)=\icid_R(\Hom CM)$
by~\cite[Theorem 2.11]{takahashi:hasm}.
In particular, one has
$\icid_R(M)<\infty$ if and only if
$\id_R(\Otimes CM)<\infty$,
and one has
$\id_R(M)<\infty$ if and only if
$\icid_R(\Hom CM)<\infty$.
It follows that $M$ has locally finite $\icid$
if and only if $\Otimes CM$ has locally finite injective dimension,
and 
$M$ has locally finite injective dimension
if and only if $\Hom CM$ has locally finite $\icid$.
\end{fact}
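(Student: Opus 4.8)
The first two sentences of Fact~\ref{fact110612a} are quoted verbatim from the literature, so there is nothing to prove there: the containment $\catac(R)\supseteq\{M:\icid_R(M)<\infty\}$ is \cite[Lemmas 4.1 and 5.1]{holm:fear}, and the two equalities $\icid_R(M)=\id_R(\Otimes CM)$ and $\id_R(M)=\icid_R(\Hom CM)$ are \cite[Theorem 2.11]{takahashi:hasm}. The ``in particular'' clause is then purely formal: a homological dimension is finite if and only if any quantity equal to it is finite, so $\icid_R(M)<\infty\iff\id_R(\Otimes CM)<\infty$ and $\id_R(M)<\infty\iff\icid_R(\Hom CM)<\infty$ are immediate consequences of those two equalities.

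For the final (``it follows'') clause I would argue locally. Fix a maximal ideal $\m\subset R$. Since $C$ is finitely generated over the noetherian ring $R$ it is finitely presented, so $\Hom_R(C,-)$ and $C\otimes_R-$ commute with localization at $\m$: $(\Otimes CM)_{\m}\cong\Otimes[R_{\m}]{C_{\m}}{M_{\m}}$ and $\Hom CM_{\m}\cong\Hom[R_{\m}]{C_{\m}}{M_{\m}}$. Moreover $C_{\m}$ is a semidualizing $R_{\m}$-module by Fact~\ref{fact110527a}(a). Therefore the ``in particular'' clause, applied over $R_{\m}$ to the semidualizing module $C_{\m}$, yields
\[
\text{I}_{C_{\m}}\text{-}\id_{R_{\m}}(M_{\m})<\infty\iff\id_{R_{\m}}\!\big((\Otimes CM)_{\m}\big)<\infty,\qquad \id_{R_{\m}}(M_{\m})<\infty\iff\text{I}_{C_{\m}}\text{-}\id_{R_{\m}}\!\big((\Hom CM)_{\m}\big)<\infty.
\]
Quantifying over all maximal $\m$ and unwinding Definition~\ref{notation08b} together with the classical notion of locally finite injective dimension gives precisely the two asserted equivalences.

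There is no genuine obstacle here; the statement is essentially a repackaging of \cite{takahashi:hasm} plus the observation that both $\id$ and $\icid$ are detected at maximal ideals. The one point that merits a word of justification is the compatibility of $\Hom_R(C,-)$ with localization, which rests on the finite presentation of $C$; once that and the two displayed equalities of Takahashi are in hand, the rest is bookkeeping.
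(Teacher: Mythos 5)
Your proposal is correct and matches the paper's (implicit) approach: the paper records this as a \emph{Fact} whose content is exactly the two citations plus the routine local-global bookkeeping you carry out, namely that $C$ finitely presented makes $\Hom_R(C,-)$ and $C\otimes_R-$ commute with localization, that $C_{\fm}$ is semidualizing over $R_{\fm}$, and that both ``locally finite'' notions are defined maximal-ideal-by-maximal-ideal. Nothing more is needed, and you supply it cleanly.
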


\begin{lem}\label{lem110612b}
Assume that $R$ is Cohen-Macaulay with a pointwise dualizing module $\omega$.
Let $C$ be a semidualizing $R$-module, and let $x=x_1,\ldots,x_i\in R$ be an $R$-regular sequence.
Then $\Hom C{\omega/(\underline x)\omega}\cong\cd/(\underline x)\cd$ where $\cd=\Hom C\omega$.
\end{lem}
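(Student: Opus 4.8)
The plan is to induct on the length $i$ of the regular sequence, peeling off one element at a time; the crucial input will be that each partial quotient $\omega/(\underline x_{[j]})\omega$ (where $\underline x_{[j]}=x_1,\ldots,x_j$) lies in the Bass class $\catbc(R)$, so that $\Hom C-$ acts on these modules exactly like reduction modulo a regular element.

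First I would record two preliminaries. Since $\omega$ is pointwise dualizing it has locally finite injective dimension, so Fact~\ref{disc01xy}(a) gives $\omega\in\catbc(R)$. And since $\omega$ is semidualizing, Fact~\ref{fact110527a}(b) shows the $R$-regular sequence $\underline x$ is $\omega$-regular, so for each $j=1,\ldots,i$ multiplication by $x_j$ is injective on $\omega/(\underline x_{[j-1]})\omega$ and there is a short exact sequence
\begin{equation*}
0\to \omega/(\underline x_{[j-1]})\omega\xra{x_j}\omega/(\underline x_{[j-1]})\omega\to\omega/(\underline x_{[j]})\omega\to 0.
\end{equation*}
Starting from $\omega\in\catbc(R)$ and applying the two-of-three property of the Bass class (Fact~\ref{disc01xy}(b)) to these sequences in turn, I get $\omega/(\underline x_{[j]})\omega\in\catbc(R)$ for all $j$, hence $\Ext{\geq 1}{C}{\omega/(\underline x_{[j]})\omega}=0$ by the defining property of $\catbc(R)$ (Definition~\ref{notation08d}).

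Then I would induct on $j$ with the claim $\Hom C{\omega/(\underline x_{[j]})\omega}\cong\cd/(\underline x_{[j]})\cd$; the case $j=0$ is just the definition of $\cd$. For the inductive step, apply the left exact functor $\Hom C-$ to the short exact sequence above: because $\Ext 1C{\omega/(\underline x_{[j-1]})\omega}=0$, the functor $\Hom C-$ carries it to a short exact sequence in which the left-hand map is again multiplication by $x_j$, giving
\begin{equation*}
\Hom C{\omega/(\underline x_{[j]})\omega}\cong \Hom C{\omega/(\underline x_{[j-1]})\omega}\,/\,x_j\Hom C{\omega/(\underline x_{[j-1]})\omega}.
\end{equation*}
Substituting the inductive hypothesis $\Hom C{\omega/(\underline x_{[j-1]})\omega}\cong\cd/(\underline x_{[j-1]})\cd$ and observing $(\cd/(\underline x_{[j-1]})\cd)/x_j(\cd/(\underline x_{[j-1]})\cd)\cong\cd/(\underline x_{[j]})\cd$ completes the step, and $j=i$ is the assertion. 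The only step needing real care is the vanishing $\Ext{\geq 1}{C}{\omega/(\underline x_{[j]})\omega}=0$; once the partial quotients are pinned down inside the Bass class, the rest is the routine fact that $\Hom C-$ commutes with killing a regular element whenever the relevant $\ext^1$ vanishes, so no serious obstacle remains.
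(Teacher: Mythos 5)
Your proof is correct. The paper argues via the Koszul complex: it tensors the augmented Koszul complex $K^+$ with $\omega$ (using $\cata_\omega(R)$-membership of the terms to keep exactness), then applies $\Hom C-$ (using $\catbc(R)$-membership to keep exactness again), and identifies the result in nonnegative degrees with $\cd\otimes_R K$, so that the cokernel of the last differential computes $\HH_0(\cd\otimes_R K)\cong\cd/(\underline x)\cd$. Your argument replaces the Koszul bookkeeping by a direct induction on the length of the regular sequence, using the two-of-three property of $\catbc(R)$ on the multiplication sequences $0\to\omega/(\underline x_{[j-1]})\omega\xra{x_j}\omega/(\underline x_{[j-1]})\omega\to\omega/(\underline x_{[j]})\omega\to 0$ to get the $\ext^1$-vanishing needed for $\Hom C-$ to be exact on them. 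The underlying mechanism is the same in both (Bass class membership forces $\Ext 1C-=0$ for the intermediate quotients, so $\Hom C-$ commutes with killing a regular element), but your inductive version is arguably cleaner: it avoids the need to identify the differentials of $\Hom C{\omega\otimes_R K^+}$ with those of $\cd\otimes_R K$, which the paper passes over with ``it is straightforward to show.''
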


\begin{proof}
Let $K$ be the Koszul complex $K^R(\underline x)$, and let $K^+$ denote the augmented Koszul complex
$$K^+=(0\to R\to\cdots\to R\to \underbrace{R/(\underline x)}_{\text{deg. $-1$}}\to 0).$$
Since $x$ is $R$-regular, $K^+$ is an exact sequence of $R$-modules of finite projective dimension.
Thus, each module $K^+_j$ is in $\cata_{\omega}(R)$, so the induced complex
$$\Otimes{\omega}{K^+}=(0\to \omega\to\cdots\to \omega\to \underbrace{\omega/(\underline x)\omega}_{\text{deg. $-1$}}\to 0)$$
is an exact sequence of $R$-modules locally of finite injective dimension.
In particular, the modules in this exact sequence are in $\catbc(R)$,
so the next sequence is also exact:
\begin{align*}
\Hom{C}{\Otimes{\omega}{K^+}}\hspace{-2cm}\\
&=(0\to \Hom{C}{\omega}\to\cdots\to \Hom{C}{\omega}\to \underbrace{\Hom{C}{\omega/(\underline x)\omega}}_{\text{deg. $-1$}}\to 0).
\end{align*}
It is straightforward to show that the differential on this sequence in positive degrees is the same as the differential on
$\Otimes {\cd}{K}$.
It follows that 
$$\Hom{C}{\omega/(\underline x)\omega}
\cong\HH_0(\Otimes {\cd}{K})
\cong\cd/(\underline x)\cd$$
as desired.
\end{proof}

\begin{proofn}[Proof of Theorem~\ref{thm110530b}\eqref{thm110530b2}]
\label{proof110609b}
Assume that $R$ is Cohen-Macaulay  with a pointwise dualizing
module $\omega$ and that $M$ is a non-zero finitely generated $R$-module with locally finite $\icid$. 
Fact~\ref{fact110612a} implies that
$\Otimes CM$ has locally finite injective dimension.
Since $M\neq 0$,  we have $\Otimes C M\neq 0$, by Fact~\ref{fact110527a}(b), so
Theorem~\ref{thm110530a} provides
an integer $r = \pd_R(\Hom\omega {\Otimes CM})$, a proper
$R$-regular sequence $\underline z = z_1, \ldots , z_r$, non-negative integers $n_0, n_1, \ldots , n_r$ with $n_r\geq 1$, and an
exact sequence
\begin{equation}\label{proof110609b1}
0 \to \Otimes CM \to Z' \to N' \to 0
\end{equation}
with $Z' = \oplus_{i=0}^r(\omega/(\underline z_{[i]})\omega)^{n_i}$, where $N'$
has locally finite injective dimension and $\pd_R(\Hom\omega {N'})\leq r$. 

As $\Otimes CM$  has locally finite injective dimension, 
we have $\Otimes CM\in\catbc(R)$ by Fact~\ref{disc01xy}(b), so $\Ext 1C{\Otimes CM}=0$ and $\Hom C{\Otimes CM}\cong M$.
Thus, an application of $\Hom C-$ to the sequence~\eqref{proof110609b1}
yields the next exact sequence:
\begin{equation}\label{proof110609b2}
0 \to M \to \Hom C{Z'} \to \Hom C{N'} \to 0.
\end{equation}
Since $N'$ has locally finite injective dimension, Fact~\ref{fact110612a}
implies that the $R$-module $N:=\Hom C{N'}$ has locally finite $\icid$.
With $Z:=\Hom C{Z'}$,
it remains to show that 
$Z \cong \oplus_{i=0}^r(\cd/(\underline z_{[i]})\cd)^{n_i}$  and $\pd_R(\Hom\omega{\Otimes CN})\leq r$.
The first of these follows from the next sequence of isomorphisms
\begin{align*}
Z
&\cong\Hom C{\oplus_{i=0}^r(\omega/(\underline z_{[i]})\omega)^{n_i}}\\
&\cong\oplus_{i=0}^r\Hom C{\Otimes{\omega}{(R/(\underline z_{[i]})}}^{n_i}\\
&\cong\oplus_{i=0}^r(\cd/(\underline z_{[i]})\cd)^{n_i}
\end{align*}
where the last isomorphism is from Lemma~\ref{lem110612b}.
To complete the proof, observe that since $N'$ has locally finite injective dimension,
it is in $\catbc(R)$, so we have $N'\cong \Otimes CN$.
This implies that
$\pd_R(\Hom\omega{\Otimes CN})=\pd_R(\Hom\omega{N'})
\leq r$,
as desired.
\qed
\end{proofn}

In the next definition, the special case $C=R$ recovers the
complete injective resolutions and Gorenstein injective modules.

\begin{defn} \label{notation08b'}
Let $C$ be a semidualizing $R$-module.
A \emph{complete $\catic\cati$-coresolu-tion}
is a complex $Y$ of $R$-modules 
such that
$Y$ is exact and $\Hom UY$ is exact for each $U\in\catic(R)$, 
$Y_i$ is injective for  $i\leq 0$, and $Y_i$ is  $C$-injective for $i>0$.
An $R$-module $N$ is \emph{$\text{G}_C$-injective} if there
exists a complete $\catic\cati$-coresolution $Y$ such that $N\cong\Ker(\partial^Y_0)$,
and $Y$ is a \emph{complete $\catic\cati$-coresolution of $N$}.
Let
$\catgic(R)$ denote the class of $\text{G}_C$-injective $R$-modules,
and set $\catg(\catic)=\catgic(R)\cap\catac(R)$.
We  let $\catgic\text{-}\id_R(-)$ and $\opg(\catic)\text{-}\id_R(-)$ denote
the homological dimensions obtained from coresolutions  in $\catgic(R)$
and $\opg(\catic)$, respectively.

An $R$-module $M$ has
\emph{locally finite $\opg(\catic)\text{-}\id$} provided that $\opg(\cati_{C_{\m}})\text{-}\id_{R_{\m}}(M_{\m})<\infty$ for each maximal ideal $\m\subset R$.
An $R$-module $M$ has
\emph{locally finite $\opg\catic\text{-}\id$} provided that $\opg\cati_{C_{\m}}\text{-}\id_{R_{\m}}(M_{\m})<\infty$ for each maximal ideal $\m\subset R$.
\end{defn}

The next  lemma explains the relation between the different Gorenstein injective dimensions.

\begin{lem}[\protect{\cite[Lemma 2.10]{sather:tate1}}] \label{lem0701'}
Let $C$ be a semidualizing $R$-module. For an $R$-module $M$,
the following conditions are equivalent:
\begin{enumerate}[\quad\rm(i)]
\item \label{lem0701'i}
$\opg(\catic)\text{-}\id_R(M)<\infty$;
\item \label{lem0701'ii}
$\opg\catic\text{-}\id_R(M)<\infty$ and $M\in\catac(R)$; and
\item \label{lem0701'iii}
$\gid_R(C\otimes_RM)<\infty$ and $M\in\catac(R)$.
\end{enumerate}
When these conditions are satisfied, we have
\begin{align*} \opg(\catic)\text{-}\id_R(M)
  &
  =\opg\catic\text{-}\id_R(M)
  =\gid_R(C\otimes_RM).  
\end{align*}
\end{lem}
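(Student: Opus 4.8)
The plan is to prove Lemma~\ref{lem0701'} by dualizing the argument for Lemma~\ref{lem0701x}, with Foxby equivalence (Fact~\ref{disc01xy}(c)) transporting everything between the $\catic(R)$-relative injective picture over $M$ and the absolute Gorenstein injective picture over $\Otimes CM$. The two inputs I would isolate are: \emph{(A)} $\catic(R)\subseteq\catac(R)$, which is Fact~\ref{fact110612a}; and \emph{(B)} the Gorenstein-injective case of Foxby equivalence, namely that $\Otimes C-$ and $\Hom C-$ restrict to quasi-inverse equivalences between $\catg(\catic)=\catgic(R)\cap\catac(R)$ and $\catgi(R)\cap\catbc(R)$. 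Note that (B) already yields $\catic(R)\subseteq\catg(\catic)$, since a $C$-injective module $\Hom CE$ is the image under $\Hom C-$ of the injective (hence Gorenstein injective) module $E\in\catbc(R)$; and then $\catg(\catic)\subseteq\catgic(R)$ is definitional, so $\catic(R)\subseteq\catgic(R)$. I would also use the standard fact, from the basic theory of $\text{G}_C$-injective dimension~\cite{holm:smarghd}, that $\catgic(R)$ is coresolving, so that $\catgic\text{-}\id$ drops by one across a short exact sequence $0\to A\to X\to B\to0$ with $X\in\catgic(R)$.

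For \textbf{(i)$\Rightarrow$(ii)}: from $\catg(\catic)\subseteq\catgic(R)$ we get $\catgic\text{-}\id_R(M)\le\opg(\catic)\text{-}\id_R(M)$; if the latter is finite, choose a $\catg(\catic)$-coresolution of $M$, split it into short exact sequences, and run the two-of-three property of $\catac(R)$ (Fact~\ref{disc01xy}(b)) down the cosyzygies, ending with $M\in\catac(R)$. For the reverse inequality, suppose $\catgic\text{-}\id_R(M)=n<\infty$ and $M\in\catac(R)$. Since $M\cong\Hom C{\Otimes CM}$, applying the left-exact functor $\Hom C-$ to an embedding $\Otimes CM\hookrightarrow E$ into an injective module embeds $M$ into $\Hom CE\in\catic(R)\subseteq\catg(\catic)$. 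Iterating, the resulting cosyzygies $M^{(k)}$ stay in $\catac(R)$ (two-of-three, as each $\Hom C{E_k}\in\catac(R)$), while the coresolving property of $\catgic(R)$ makes $\catgic\text{-}\id$ drop by one at each step; after $n$ steps the cosyzygy lies in $\catgic(R)\cap\catac(R)=\catg(\catic)$, giving a length-$n$ $\catg(\catic)$-coresolution of $M$. Hence $\opg(\catic)\text{-}\id_R(M)\le n=\catgic\text{-}\id_R(M)$, so (ii)$\Rightarrow$(i) with equality.

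For the link with $\gid_R(\Otimes CM)$, assume $M\in\catac(R)$ (which holds under (i), (ii), and (iii)). Applying $\Otimes C-$ to the length-$n$ $\catg(\catic)$-coresolution above keeps it exact (its cosyzygies lie in $\catac(R)$, so $\Tor{\geq1}{C}{-}$ annihilates them), producing $0\to\Otimes CM\to E_0\to\cdots\to E_{n-1}\to\Otimes C{M^{(n)}}\to0$ with the $E_i$ injective and $\Otimes C{M^{(n)}}\in\catgi(R)\cap\catbc(R)$ by (B); thus $\gid_R(\Otimes CM)\le n$. Conversely, if $\gid_R(\Otimes CM)=m<\infty$, then $\Otimes CM\in\catbc(R)$ (Foxby equivalence), so coresolving it by injective modules for $m$ steps yields a cosyzygy $H$ that is Gorenstein injective, with all cosyzygies in $\catbc(R)$ (two-of-three), hence $H\in\catgi(R)\cap\catbc(R)$; applying $\Hom C-$, which is exact here because $\Ext{\geq1}{C}{-}$ vanishes on $\catbc(R)$, and using $\Hom C{\Otimes CM}\cong M$ together with (B), produces a length-$m$ $\catg(\catic)$-coresolution of $M$, so $\opg(\catic)\text{-}\id_R(M)\le m=\gid_R(\Otimes CM)$. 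Combining the inequalities established so far closes the chain (i)$\Leftrightarrow$(ii)$\Leftrightarrow$(iii) and gives all three displayed equalities.

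I expect the main obstacle to be input (B), the Gorenstein-injective Foxby equivalence. The delicate point is that a complete $\catic\cati$-coresolution $Y$ of a module $G\in\catac(R)$ need \emph{not} have all of its cycles in $\catac(R)$: the positive-degree cycles do, by two-of-three up the coresolution (since $C$-injectives lie in $\catac(R)$), but the nonpositive-degree cycles are built from ordinary injective modules and generally lie only in $\catbc(R)$, so one cannot just apply $\Otimes C-$ to $Y$ termwise. The fix is to construct a complete injective resolution of $\Otimes CG$ directly from the $\catac(R)$-membership of $G$, using the hom-tensor adjunction $\Hom{A}{\Hom CB}\cong\Hom{\Otimes CA}{B}$ together with the unit and counit morphisms of Foxby equivalence (and the analogous argument on the $\Hom C-$ side); establishing this equivalence is the real work, and everything else above is exact-functor bookkeeping of the kind already seen in Proofs~\ref{proof110530a}, \ref{proof110530b}, and~\ref{proof110609a}.
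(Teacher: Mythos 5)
The paper does not prove this lemma; it cites it as \cite[Lemma 2.10]{sather:tate1}, so there is no internal proof to compare against. Your proposal attempts a proof from scratch, and the high-level strategy --- transport everything through Foxby equivalence, using the coresolving property of $\catgic(R)$ and the two-of-three property of $\catac(R)$ for bookkeeping --- is the natural and almost certainly the intended way to organize the argument; the chain of implications and the dimension-shift accounting are sound as far as they go.

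The genuine gap is your input (B): that $\Otimes C-$ and $\Hom C-$ restrict to quasi-inverse equivalences between $\opg(\catic)=\catgic(R)\cap\catac(R)$ and $\catgi(R)\cap\catbc(R)$. You rightly flag this as ``the real work,'' but then leave it unproved. Observe that (B) is precisely the $n=0$ instance of the equivalence (i)$\Leftrightarrow$(iii) with dimension equality that you are trying to prove, so without an independent argument for (B) the proposal is circular. Your sketched fix points in the right direction --- the adjunction isomorphism $\Hom{\Hom CJ}{G}\cong\Hom{J}{\Otimes CG}$ for $G\in\catac(R)$ and injective $J$ is indeed the key identity --- but the actual verification is where the content lives: one must show that the complex $Y'$ obtained by applying $\Otimes C-$ to the positive-degree half of a complete $\catic\cati$-coresolution $Y$ of $G$ and splicing with a fresh injective coresolution of $\Otimes CG$ is exact and is $\Hom J-$-exact for every injective $J$. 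That in turn needs the degree-zero adjunction to be promoted to an isomorphism $\Ext{\geq 1}{\Hom CJ}{G}\cong\Ext{\geq 1}{J}{\Otimes CG}$, which rests on the vanishings $\Ext{\geq 1}{C}{\Otimes CG}=0$ and $\Tor{\geq 1}{C}{\Hom CJ}=0$ coming from the Auslander and Bass class memberships, together with the mirror-image construction showing $\Hom C-$ carries $\catgi(R)\cap\catbc(R)$ into $\opg(\catic)$. None of this appears in your write-up. To complete the proof you should either carry out this verification in full or cite (B) explicitly from the literature (it is part of the toolkit of \cite{holm:smarghd} and \cite{sather:tate1}) rather than deriving it from an acknowledged but unfilled sketch.
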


We actually need the following corollary of the previous result.

\begin{lem} \label{lem110613a}
Let $C$ be a semidualizing $R$-module. For an $R$-module $M$,
the following conditions are equivalent:
\begin{enumerate}[\quad\rm(i)]
\item \label{lem110613ai}
$M$ has locally finite $\opg(\catic)\text{-}\id$;
\item \label{lem110613aii}
$M$ has locally finite $\opg\catic\text{-}\id$ and $M\in\catac(R)$; and
\item \label{lem110613aiii}
$\Otimes CM$ has locally finite $\gid$ and $M\in\catac(R)$.
\end{enumerate}
\end{lem}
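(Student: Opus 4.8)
The plan is to obtain this lemma as a direct consequence of the local result Lemma~\ref{lem0701'} by localizing at each maximal ideal of $R$. Two ingredients make this work. First, the semidualizing property localizes (Fact~\ref{fact110527a}(a)), so $C_{\m}$ is a semidualizing $R_{\m}$-module for every maximal ideal $\m\subset R$, and Lemma~\ref{lem0701'} applies to the pair $(R_{\m},C_{\m})$. Second, membership in the Auslander class is a local condition: $M\in\catac(R)$ if and only if $M_{\m}\in\cat{A}_{C_{\m}}(R_{\m})$ for every maximal ideal $\m\in\supp_R(M)$, by the analogue for $\catac$ of the local-global principle recorded in Fact~\ref{disc01xy}(a). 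Finally, observe that, by their very definitions, conditions~\eqref{lem110613ai} and~\eqref{lem110613aii} are the conjunctions over all maximal ideals $\m$ of the corresponding finiteness statements for $M_{\m}$ over $R_{\m}$, and likewise condition~\eqref{lem110613aiii}, once one records the base-change isomorphism $(\Otimes CM)_{\m}\cong\Otimes[R_{\m}]{C_{\m}}{M_{\m}}$.

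For \eqref{lem110613ai}$\iff$\eqref{lem110613aii}, fix a maximal ideal $\m\subset R$ and apply the equivalence \eqref{lem0701'i}$\iff$\eqref{lem0701'ii} of Lemma~\ref{lem0701'} to the $R_{\m}$-module $M_{\m}$: one has $\opg(\cati_{C_{\m}})\text{-}\id_{R_{\m}}(M_{\m})<\infty$ if and only if both $\opg\cati_{C_{\m}}\text{-}\id_{R_{\m}}(M_{\m})<\infty$ and $M_{\m}\in\cat{A}_{C_{\m}}(R_{\m})$. Taking the conjunction of these equivalences over all $\m$, condition~\eqref{lem110613ai} becomes the assertion that $M$ has locally finite $\opg\catic\text{-}\id$ and that $M_{\m}\in\cat{A}_{C_{\m}}(R_{\m})$ for every maximal ideal $\m$; by the local-global principle for the Auslander class, the latter clause is exactly $M\in\catac(R)$, so we arrive at~\eqref{lem110613aii}.

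For \eqref{lem110613aii}$\iff$\eqref{lem110613aiii}, note that both conditions contain $M\in\catac(R)$, so it suffices, granting this, to show that $M$ has locally finite $\opg\catic\text{-}\id$ if and only if $\Otimes CM$ has locally finite $\gid$. Since $M\in\catac(R)$, we have $M_{\m}\in\cat{A}_{C_{\m}}(R_{\m})$ for every maximal ideal $\m$; fixing such an $\m$, the equivalence \eqref{lem0701'ii}$\iff$\eqref{lem0701'iii} of Lemma~\ref{lem0701'} gives that $\opg\cati_{C_{\m}}\text{-}\id_{R_{\m}}(M_{\m})<\infty$ if and only if $\gid_{R_{\m}}(\Otimes[R_{\m}]{C_{\m}}{M_{\m}})<\infty$. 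Via the isomorphism $\Otimes[R_{\m}]{C_{\m}}{M_{\m}}\cong(\Otimes CM)_{\m}$, taking the conjunction over all $\m$ shows that $M$ has locally finite $\opg\catic\text{-}\id$ if and only if $\Otimes CM$ has locally finite $\gid$, which is what was needed.

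I expect no genuine obstacle beyond Lemma~\ref{lem0701'} itself: the present statement is a true corollary, and its proof is essentially bookkeeping. The only points worth a second look are the precise form of the local-global principle for the Auslander class---in particular, that it suffices to test at maximal ideals in $\supp_R(M)$---together with the trivial case of a maximal ideal $\m\notin\supp_R(M)$, where $M_{\m}=0$, $(\Otimes CM)_{\m}=0$, and every condition holds by the convention that homological dimensions of the zero module equal $-\infty$.
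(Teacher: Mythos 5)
Your proposal is correct and follows essentially the same route as the paper: localize at each maximal ideal, apply Lemma~\ref{lem0701'} to $(R_{\m},C_{\m},M_{\m})$, and invoke the local-global principle for Auslander classes together with the base-change isomorphism for $C\otimes_R M$. The paper's proof is just a one-line summary of exactly this argument, so you have simply supplied the bookkeeping it elides (and, incidentally, correctly pointed to Fact~\ref{disc01xy}(a) for the local-global principle, whereas the paper's citation to part (b) appears to be a slip).
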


\begin{proof}
This follows from the definitions of the locally finite 
Gorenstein injective dimensions
and the local-global principal for Auslander classes from Fact~\ref{disc01xy}(b).
\end{proof}

\begin{proofn}[Proof of Theorem~\ref{thm110530b}\eqref{thm110530b1}]
\label{proof110609c}
Assume that $R$ is Cohen-Macaulay  with a pointwise dualizing
module $\omega$ and that $M$ is a non-zero finitely generated $R$-module in $\catac(R)$ with locally finite $\opg\icid$. 
Lemma~\ref{lem110613a} implies that 
$\Otimes CM$ has locally finite G-injective dimension.
Since $M\neq 0$,  we have $\Otimes C M\neq 0$, by Fact~\ref{fact110527a}(b), so
Theorem~\ref{thm110608a} provides
an integer $r = \gdim_R(\Hom\omega {\Otimes CM})$, a proper
$R$-regular sequence $\underline z = z_1, \ldots , z_r$, non-negative integers $n_0, n_1, \ldots , n_r$ with $n_r\geq 1$, and an
exact sequence
\begin{equation}\label{proof110609c1}
0 \to \Otimes CM \to Z' \to N' \to 0
\end{equation}
such that $Z' = \oplus_{i=0}^r(\omega/(\underline z_{[i]})\omega)^{n_i}$, $N'\in\catb_{\omega}(R)$, and $\gdim_R(\Hom\omega {N'})\leq r$. 

Since $M$ is in $\catac(R)$,
we have $\Ext 1C{\Otimes CM}=0$ and $\Hom C{\Otimes CM}\cong M$.
Thus, an application of $\Hom C-$ to the sequence~\eqref{proof110609c1}
yields the next exact sequence:
\begin{equation}\label{proof110609c2}
0 \to M \to \Hom C{Z'} \to \Hom C{N'} \to 0.
\end{equation}
Note that $Z'$ has locally finite injective dimension, so we have $Z'\in\catbc(R)$, which implies that $\Hom{C}{Z'}$
and $\Hom{C}{N'}$ are in $\catac(R)$
by Fact~\ref{disc01xy}(b)--(c).
With $Z:=\Hom C{Z'}$, we have $\gdim_R(\Hom\omega{\Otimes CN})\leq r$  and $Z \cong \oplus_{i=0}^r(\cd/(\underline z_{[i]})\cd)^{n_i}$, as in Proof~\ref{proof110609b}.
\qed
\end{proofn}

\section*{Acknowledgments}

We are grateful to the referee for her/his thoughtful comments.

%\bibliography{../+new}
\providecommand{\bysame}{\leavevmode\hbox to3em{\hrulefill}\thinspace}
\providecommand{\MR}{\relax\ifhmode\unskip\space\fi MR }
% \MRhref is called by the amsart/book/proc definition of \MR.
\providecommand{\MRhref}[2]{%
  \href{http://www.ams.org/mathscinet-getitem?mr=#1}{#2}
}
\providecommand{\href}[2]{#2}

\end{document}